\newtheorem{theorem}{Theorem}[section]
\newtheorem{lemma}[theorem]{Lemma}
\newtheorem{proposition}[theorem]{Proposition}
\newtheorem{corollary}[theorem]{Corollary}
\newtheorem{example}[theorem]{Example}
\theoremstyle{definition}}
\theoremstyle{definition}\newtheorem{definition}[theorem]{Definition}}
\theoremstyle{definition}}
\newtheorem*{thmgs}{Theorem GS}
\numberwithin{equation}{section}
\def\K{{\mathbb K}}
\def\epsilon{\varepsilon}
\def\kappa{\varkappa}
\def\phi{\varphi}
\def\leq{\leqslant}
\def\geq{\geqslant}
\def\dim{{\rm dim}\,}
\def\ker{\hbox{\tt ker}\,}
\def\deg{{\rm deg}\,}
\def\kxy{\langle x,y \rangle}
\title{Golod-Shafarevich type theorems and  potential algebras}
\author{Natalia Iyudu and Agata Smoktunowicz
}
\date{}
\begin{document}

\maketitle

\begin{abstract}
Potential algebras feature in the minimal model program and noncommutative
resolution of singularities, and the  important cases are when they are finite dimensional, or of linear growth. We develop  techniques, involving Gr\"obner basis theory and generalized Golod-Shafarevich type theorems for potential algebras, to determine finiteness conditions in terms of the potential.

 We consider two-generated potential algebras. Using Gr\"obner bases techniques  and arguing in terms of associated truncated algebra we prove that they cannot have dimension smaller than $8$. This answers a question of Wemyss \cite{Wemyss}, related to the geometric argument of Toda \cite{T}. We derive from the improved version of the Golod-Shafarevich theorem, that if the potential has only terms of degree 5 or higher, then the potential algebra is infinite dimensional. We prove, that potential algebra for any homogeneous potential of degree $n\geq 3$ is infinite dimensional. The proof includes a complete classification of all potentials of degree 3. Then we introduce a certain version of Koszul complex, and  prove that in the class ${\cal P}_n$ of potential algebras with homogeneous potential of degree $n+1\geq 4$, the minimal Hilbert series  is $H_n=\frac{1}{1-2t+2t^n-t^{n+1}}$, so they are all infinite dimensional. Moreover, growth could be polynomial (but non-linear) for the potential of degree 4, and is always exponential for potential of degree starting from 5.

 For one particular type of potential we prove a conjecture by Wemyss, which relates the difference of dimensions of potential algebra and its abelianization with  Gopakumar-Vafa invariants.

\end{abstract}

\small \noindent{\bf MSC:} \ \  16A22, 16S37, 14A22

\noindent{\bf Keywords:} \ \ Potential algebras, generalized Koszul complex,
Hilbert series, growth, Gr\"obner  basis \normalsize

\section{Introduction}\label{Int}

Questions we study in this paper arise from the fact that potential algebras, appearing in minimal model program, in noncommutative resolution of singularities, such as  contraction algebra introduced
by Donovan-Wemyss \cite{WD}, are important, when they are finite dimensional, or have linear growth.
So it was our goal  to develop techniques allowing to recognize when a potential gives rise to an algebra,
which has this kind of finiteness properties, or extract more information on the algebra,
such as its dimension, in terms of potential. Potential algebras and their versions appear in
many different and related contexts in physics and mathematics and are known also under the names  vacualgebra,
Jacobi algebra, etc. (see, for example, \cite{Bo,BW,DV1,DV2,W}).

 Throughout the paper we use the following notation:  ${\mathbb K}\langle x,y\rangle$ is
the free associative algebra in two variables, $F\in{\mathbb K}\langle x,y\rangle$ is a cyclicly invariant
polynomial, not necessarily homogeneous, however the case of a homogeneous $F$ will be treated separately.
We always assume that $F$ starts in degree $\geq3$, that is, the first three homogeneous
components of $F$ are zero: $F_0=F_1=F_2=0$, which means we suppose generators of $A$ are
linearly independent. We consider the {\it potential algebra} $A_F$, given by two relations,
which are partial derivatives of $F$, i.e. $A_F$ is the
factor of ${\mathbb K}\langle x,y\rangle$ by the ideal $I_F$ generated by
$\frac{\partial F}{\partial x}$ and $\frac{\partial F}{\partial y}$, where the linear maps
$\frac{\partial}{\partial x}:{\mathbb K}\langle x,y\rangle\to {\mathbb K}\langle x,y\rangle$ and
$\frac{\partial}{\partial y}:{\mathbb K}\langle x,y\rangle\to {\mathbb K}\langle x,y\rangle$ are defined on
monomials as follows:

$$
\frac{\partial w}{\partial x}=\left\{\begin{array}{ll}u&\text{if $w=xu$,}\\ 0&\text{otherwise,}\end{array}\right.
\qquad \frac{\partial w}{\partial y}=\left\{\begin{array}{ll}u&\text{if $w=yu$,}\\ 0&\text{otherwise.}\end{array}\right.
$$

 This notion of  noncommutative derivation of free associative algebra was introduced by Kontsevich in \cite{Ko}; an equivalent definition is given, for example, in \cite{G}.  In the context of  free groups, Lie algebras and enveloping algebras of Lie algebras the notion of noncommutative  derivation  has been  investigated since the late  1940's by several authors.   In \cite{fox}, Ralph Fox introduced derivatives over free groups to study invariants of
group presentations (see [9]).  In \cite{U} Umirbaev introduced derivatives for enveloping algebras of Lie algebras, and
 in \cite{Wor} Woronowicz introduced differential calculus on quantum groups (see \cite{khar} for more details).

 The common notation we accept further is $u \rcirclearrowleft$, which means the sum of all cyclic permutations of the monomial $u \in K \langle X \rangle$. It is essential for the definition of derivatives, as we gave them, to consider cyclicly invariant polynomials. For an arbitrary polynomial, the equivalent version of derivatives would sound like in \cite{G}.

Using the improved version of the Golod--Shafarevich theorem,
which takes into account the additional information that the relations arise from a potential, we derive the following fact.

\begin{theorem}\label{deg5}
Let $A_F$ be a potential algebra given by a not necessarily homogeneous potential $F$
having only terms of degree $5$ or higher. Then $A_F$ is infinite dimensional.
\end{theorem}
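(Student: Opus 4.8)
The plan is to read off the degrees of the two defining relations, to extract from the potential the extra syzygy it forces on them, to feed these into the improved (potential) Golod--Shafarevich inequality, and then to verify a one--variable numerical inequality. Write $r_1=\frac{\partial F}{\partial x}$, $r_2=\frac{\partial F}{\partial y}$, so that $A_F=\K\langle x,y\rangle/(r_1,r_2)$. Since $F$ has no homogeneous component of degree $\leq 4$, the relations $r_1,r_2$ have no component of degree $\leq 3$; say they start in degree $n$, so that $n\geq 4$ and $n+1$ is the degree in which $F$ starts (at least one of $r_1,r_2$ is non--zero in degree $n$: if both $\frac{\partial}{\partial x}$ and $\frac{\partial}{\partial y}$ annihilated the lowest component $F_{n+1}$ of $F$ then, by cyclic invariance, $F_{n+1}$ would be a scalar multiple of $x^{n+1}$ and of $y^{n+1}$, hence zero). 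If $\{r_1,r_2\}$ is not a minimal generating set of the ideal, then that ideal is principal of degree $\geq 4$, so $A_F$ is a two--generated one--relator algebra and is infinite dimensional already by the classical Golod--Shafarevich theorem, since $1-2t+t^{d}<0$ at $t=\tfrac34$ for every $d\geq 4$. So assume that $r_1,r_2$ form a minimal system of generators.

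The key point is that $r_1$ and $r_2$ are not independent. Splitting the monomials of $F$ according to their first letter gives $x\,r_1+y\,r_2=F$, and splitting them according to their last letter and invoking the cyclic invariance of $F$ (so that the monomials of $F$ ending in a given letter are exactly the left rotations of those beginning with it) gives $r_1\,x+r_2\,y=F$. Subtracting,
\[
x\,r_1+y\,r_2-r_1\,x-r_2\,y=0 ,
\]
a non--trivial relation between $r_1$ and $r_2$ concentrated in degree $n+1$; this is precisely the information, absent in the classical setting, that the degree--$(n+1)$ consequences of the two relations satisfy a linear dependency. Feeding into the improved Golod--Shafarevich estimate for potential algebras the two relations (of degree $n$, or of degrees $n$ and $d_2\geq n$ in the non--generic case) together with this degree--$(n+1)$ syzygy yields, coefficientwise,
\[
H_{A_F}(t)\,\bigl(1-2t+2t^{n}-t^{n+1}\bigr)\geq 1
\]
(respectively $H_{A_F}(t)\bigl(1-2t+t^{n}+t^{d_2}-t^{n+1}\bigr)\geq 1$); this is the denominator of the minimal Hilbert series in the homogeneous case. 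Note that without the $-t^{n+1}$ term the bracket would be strictly positive on $[0,1]$ for $n\geq 4$, so the classical inequality alone is inconclusive here.

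It remains to observe that for every $n\geq 4$
\[
1-2\cdot\tfrac34+2\bigl(\tfrac34\bigr)^{n}-\bigl(\tfrac34\bigr)^{n+1}=-\tfrac12+\tfrac54\bigl(\tfrac34\bigr)^{n}\leq-\tfrac12+\tfrac54\bigl(\tfrac34\bigr)^{4}=-\tfrac12+\tfrac{405}{1024}<0 ,
\]
and likewise the non--generic bracket is at most $-\tfrac12+\tfrac14(\tfrac34)^{n}+(\tfrac34)^{d_2}\leq-\tfrac12+\tfrac{405}{1024}<0$. If $A_F$ were finite dimensional then $H_{A_F}(\tfrac34)$ would be a positive real number, and evaluating the displayed coefficientwise inequality at $t=\tfrac34$ would give a positive number times a negative number on the left --- hence a negative value, contradicting that it is coefficientwise $\geq 1$. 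Therefore $A_F$ is infinite dimensional; since the bracket already has a root in $(0,1)$, one in fact gets exponential growth.

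The part carrying the real weight --- the main obstacle --- is the improved Golod--Shafarevich inequality itself, i.e.\ the fact that the single potential--induced dependency may legitimately be \emph{subtracted} from the Golod--Shafarevich denominator to create the $-t^{n+1}$ term. Simply adding a third homological term to the classical bound yields only an upper bound for $H_{A_F}$ unless the fourth term is controlled as well; what rescues the argument is the self--dual three--term complex $A_F\to A_F^{2}\to A_F^{2}\to A_F$ canonically attached to the potential, with differentials built from the cyclic first and second derivatives of $F$ and with zeroth homology $\K$, whose self--duality pins down the top--degree homology and forces the inequality in the required direction. Everything else --- the two expressions for $F$, the reduction to a minimal generating set, and the arithmetic at $t=\tfrac34$ --- is routine.
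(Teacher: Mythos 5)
There is a genuine gap, and it sits exactly where you yourself locate ``the real weight'': the claimed coefficientwise inequality $H_{A_F}(t)\,(1-2t+2t^{n}-t^{n+1})\geq 1$ obtained by \emph{subtracting} the potential syzygy from the Golod--Shafarevich denominator is never proved, and it is not a valid general principle in the form you invoke. For graded algebras, truncated Euler characteristics of the minimal free resolution alternate in direction: one has $H_A(1-2t)\leq 1$ and $H_A(1-2t+H_R)\geq 1$, but once you include a third (syzygy) term the inequality flips, giving an \emph{upper} bound of the type $H_A(1-2t+H_R-H_{\Omega_3})\leq 1$ --- useless for proving infinite-dimensionality. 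The appeal to the ``self-dual three-term complex'' does not rescue this: the paper does prove that the minimal Hilbert series in the homogeneous class ${\cal P}_n$ is $\frac1{1-2t+2t^n-t^{n+1}}$, but only in Section~3, via exactness of the potential complex for the concrete example $x^{n-1}y^2\rcirclearrowleft$ plus a Zariski-genericity semicontinuity argument (Lemma~\ref{maxra}); it is not a formal consequence of the existence of the degree-$(n+1)$ syzygy, and in any case that result is for homogeneous potentials, whereas Theorem~\ref{deg5} is about non-homogeneous $F$, where the algebra is not graded and coefficientwise Hilbert-series bounds require the filtered Vinberg--Zelmanov setting. Your final evaluation at $t=\tfrac34$ is fine \emph{if} the displayed inequality held, but as it stands the whole argument rests on an unproved (and, as stated, false-in-general) inequality.

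The paper's actual proof avoids any ``improved inequality'' of this kind. It introduces the auxiliary algebra $B=\K\langle x,y\rangle/J$ with $J$ generated by $G=\frac{\partial F}{\partial x}$ and $Hx$, where $H=\frac{\partial F}{\partial y}$; these have minimal degrees $4$ and $5$, so the classical non-graded Golod--Shafarevich theorem (with $G_B(t)=1-2t+t^4+t^5$ negative at $t_0=0.654$) already gives that $B$ has exponential growth --- note this is exactly how the degree shift from two relations of degree $4$ to degrees $4$ and $5$ bridges the gap that defeats Propositions~\ref{pro1} and~\ref{pro2}. The potential syzygy $[H,y]+[G,x]=0$ (your identity, Lemma~\ref{le1}) is then used not to modify a Hilbert-series bound but to prove the inclusion $Ix\subset J$ by rewriting $Hy=yH-xG+Gx$ and pulling $y$'s to the left. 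If $A_F$ were finite dimensional, $\K\langle x,y\rangle x/Ix$ and hence $B\bar x$ would be finite dimensional, and the decomposition $B={\rm Alg}(\bar y)+B\bar x\,{\rm Alg}(\bar y)$ would force $B$ to have linear growth, contradicting its exponential growth. If you want to keep your outline, you would need to either prove the improved inequality you assert (which the paper never does, even implicitly) or replace that step by an argument of this reduction type.
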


We prove this theorem in Section~\ref{GS}. We also show that
Theorem 3.1 does not follow from the Golod--Shafarevich theorem~\cite{GSh} applied for algebras given by the same number of relations of the same degrees as the potential algebra $A_F$.
The only facts which could be deduced directly from  the Golod--Shafarevich theorem are  Proposition~\ref{pro2} and Proposition~\ref{pro1} for not necessarily homogeneous potential. They are weaker than our results, therefore the assumption that  $A_F$ is the potential algebra is necessary.


In Section~\ref{Hom} we deal first with the case of homogeneous potentials of degree $3$. We classify
all of them up to  isomorphism and see that the corresponding algebras are infinite dimensional. We also compute
the Hilbert series for each of them.

Next, we prove the following theorem.

\begin{theorem}\label{deg4} If $F\in\K\langle x,y\rangle$ is a homogeneous potential of degree
$n\geq 4$, then the potential algebra $A_F$
is infinite dimensional.
Moreover, the minimal Hilbert series in the class ${\cal P}_n$ of
potential algebras with homogeneous potential of degree $n+1\geq 4$ is $H_n=\frac{1}{1-2t+2t^n-t^{n+1}}$.
\end{theorem}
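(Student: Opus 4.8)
The plan is to handle the two assertions separately: first infinite-dimensionality, then the precise minimal Hilbert series. For infinite-dimensionality, my first instinct would be to apply Theorem 1.1 (the improved Golod--Shafarevich bound for potential algebras), but note that Theorem 1.1 as stated covers potentials whose terms all have degree $\geq 5$, i.e.\ homogeneous potentials of degree $\geq 5$; the degree-$4$ homogeneous case and, in fact, the sharp Hilbert series claim need a separate, more refined argument. So I would not rely on Theorem 1.1 here but instead build the generalized Koszul complex advertised in the abstract. The idea: for a potential $F$ homogeneous of degree $n+1$, the two relations $r_1=\partial F/\partial x$ and $r_2=\partial F/\partial y$ are homogeneous of degree $n$, and the cyclic invariance of $F$ forces a syzygy between them, namely $x r_1 + y r_2 = \overleftarrow{r_1} x + \overleftarrow{r_2} y$ (a ``rotation'' relation coming from $F$ being fixed by cyclic permutation), which is exactly the extra structure that ordinary Golod--Shafarevich cannot see.

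Concretely, I would write down a complex of free left $A_F$-modules
$$
0 \to A_F \xrightarrow{\ \partial_3\ } A_F^2 \xrightarrow{\ \partial_2\ } A_F^2 \xrightarrow{\ \partial_1\ } A_F \to \K \to 0,
$$
with $\partial_1$ given by $(x,y)$ (i.e.\ multiplication landing in the augmentation ideal), $\partial_2$ given by the relation matrix $\binom{r_1}{r_2}$ suitably encoded, and $\partial_3$ given by the potential syzygy above; the gradings are $A_F$ in degrees $0,\ 1,\ n,\ n+1$ respectively for the four terms. I would then argue that this complex is exact (or at least exact in the relevant range / has controlled homology), for instance by a Gröbner basis / Anick-resolution argument applied to a generic or to a worst-case potential, or by a direct diagram chase using the partial-derivative identities. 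Exactness gives the Euler characteristic identity $H_{A_F}(t)\,(1 - 2t + 2t^n - t^{n+1}) = 1$, hence $H_{A_F}(t) = H_n(t) = \frac{1}{1 - 2t + 2t^n - t^{n+1}}$; and since the denominator has a root in $(0,1)$ (evaluate at $t=0$ it is $1$, and it is negative somewhere in $(0,1)$ for $n\geq 3$, or more simply the coefficients of $H_n$ are easily checked to be eventually positive and unbounded), $A_F$ is infinite dimensional. For the general (non-generic) potential one only gets that the complex is a complex, not necessarily exact, which yields the coefficientwise inequality $H_{A_F}(t) \succeq H_n(t)$ by the standard rank-counting argument (each $H^i$ only adds nonnegative contributions), proving both that $H_n$ is a lower bound in ${\cal P}_n$ and, combined with exhibiting one algebra achieving it, that it is the minimum.

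The main obstacle I anticipate is establishing exactness (equivalently, minimality/acyclicity) of the proposed Koszul-type complex for \emph{some} potential of each degree $n+1 \geq 4$: one must produce a potential whose two partial derivatives form a Gröbner basis with no extra overlap relations beyond the forced potential syzygy, so that the complex computes the $\mathrm{Tor}$ groups exactly. I would try a monomial or near-monomial potential such as $F = (xy)^{k}$-type or $x^a y^b x^c\dots \rcirclearrowleft$ and compute $S$-polynomials / Anick chains explicitly to confirm the resolution is minimal; the bookkeeping of which overlaps close is where the real work lies. The degree-$4$ case ($n=3$) should be done first as the base case and as a sanity check that the formula $H_3 = \frac{1}{1-2t+2t^3-t^4}$ indeed exhibits polynomial-but-nonlinear growth (the denominator factors as $(1-t)^2(1-... )$ with a double root at $1$), consistent with the growth claims in the abstract, while for $n \geq 4$ the denominator acquires a root strictly inside $(0,1)$, forcing exponential growth.
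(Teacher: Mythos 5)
Your construction is the same as the paper's: the complex $0\to A\to A^2\to A^2\to A\to\K\to 0$ (the paper's ``potential complex'', with $d_2$ the Hessian matrix of second derivatives and $d_3(u)=(xu,yu)$), exactness proved for one explicit extremal potential via a Gr\"obner basis computation (the paper uses $x^{n-1}y^2\rcirclearrowleft$, whose two leading monomials have a single resolvable overlap), and the Euler characteristic of the exact complex giving $H_n=\frac{1}{1-2t+2t^n-t^{n+1}}$. Up to that point your plan matches the paper.

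The gap is in the step where you pass from the extremal example to an arbitrary $A\in{\cal P}_n$: you claim that since the complex exists (but may fail to be exact), ``the standard rank-counting argument (each $H^i$ only adds nonnegative contributions)'' yields $H_{A}\succeq H_n$ coefficientwise. This is false as stated: in the Euler characteristic of a five-term complex the homology corrections enter with \emph{alternating} signs, so no one-sided coefficientwise inequality follows from the complex alone. Concretely, the paper first proves (Lemma~\ref{le2}, using the potential structure via $I=xI+yI+\partial_xF\,\K\langle x,y\rangle+\partial_yF\,\K\langle x,y\rangle$) that the complex is exact at the three rightmost terms for \emph{every} potential algebra; this gives, on the degree-$m$ slice, $\dim A_m=2\dim A_{m-1}-2\dim A_{m-n}+\dim(\ker d_2)_{m-n}\geq 2\dim A_{m-1}-2\dim A_{m-n}+\dim A_{m-n-1}-\dim(\ker d_3)_{m-n-1}$. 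Even if $d_3$ were injective for every $A$ (which is not established, and your argument does not supply it), the resulting recursive inequality $\dim A_m\geq 2\dim A_{m-1}-2\dim A_{m-n}+\dim A_{m-n-1}$ cannot be propagated by induction to $\dim A_m\geq b_m$, because the term $-2\dim A_{m-n}$ has a negative coefficient: an algebra with \emph{larger} earlier components gets a \emph{weaker} lower bound. This is exactly why the paper does not argue this way: its Proposition~\ref{minser} runs an induction on degree using a Zariski semicontinuity argument (Lemma~\ref{maxra}: the ranks of the graded pieces of $d_3$ and $d_2$ are maximal on a Zariski open set, and the set of algebras minimizing the dimension of a given graded component is Zariski open), anchored by the fully exact example $B$, to conclude that the \emph{generic} algebra in ${\cal P}_n$ satisfies the equality recursion and hence realizes the minimal coefficients. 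Without this (or some substitute controlling the kernels of $d_3$ and $d_2$ uniformly), your plan proves the formula for generic/explicit potentials but not the minimality claim, nor infinite-dimensionality of every degree-$4$ homogeneous potential algebra, which is precisely the case not covered by the Golod--Shafarevich estimates (Theorem~\ref{deg5} only covers potentials with all terms of degree $\geq 5$).
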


\begin{corollary} Growth of a potential algebra with homogeneous potential of degree $4$
can be polynomial (at least quadratic), but starting from degree $5$ it is always exponential.
\end{corollary}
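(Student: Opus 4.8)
The plan is to deduce both halves of the corollary directly from the minimal Hilbert series $H_n(t)=\frac{1}{1-2t+2t^n-t^{n+1}}$ furnished by Theorem~\ref{deg4}, together with one elementary factorisation. Put $d_n(t)=1-2t+2t^n-t^{n+1}$. First I would record that
\[
d_n(t)=(1-t)\,g_n(t),\qquad g_n(t)=1-(t+t^2+\cdots+t^{n-1})+t^n ,
\]
which is checked by expanding $(1-t)g_n(t)$ and watching the intermediate coefficients cancel; equivalently $g_n=d_n/(1-t)$, since $d_n(1)=0$. In particular $g_n(0)=1$ and $g_n(1)=1-(n-1)+1=3-n$. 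Everything else is reading off consequences of this identity, using that $H_n$ is (by Theorem~\ref{deg4}) simultaneously a coefficientwise lower bound for the Hilbert series of every algebra in ${\cal P}_n$ and the Hilbert series of an explicit algebra in ${\cal P}_n$.

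\emph{Degree $4$, i.e.\ the class ${\cal P}_3$.} Here $g_3(t)=1-t-t^2+t^3=(1-t)(1-t^2)=(1-t)^2(1+t)$, so
\[
H_3(t)=\frac{1}{(1-t)^3(1+t)} .
\]
All singularities of this rational function lie on the unit circle — a pole of order $3$ at $t=1$ and a simple pole at $t=-1$ — so a partial-fraction expansion gives $[t^m]H_3=\tfrac14 m^2+O(m)$ and $\sum_{i\le m}[t^i]H_3=[t^m]\frac{1}{(1-t)^4(1+t)}=\Theta(m^3)$. Since $H_3$ is the Hilbert series of a genuine potential algebra with degree-$4$ potential (the algebra exhibited in the proof of Theorem~\ref{deg4}), that algebra has polynomial growth (of Gelfand--Kirillov dimension $3$); and since $[t^m]H_3$ is a lower bound for $\dim(A_F)_m$ for every such $A_F$, none of them can grow slower than quadratically. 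This is the first assertion of the corollary.

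\emph{Degree $n+1\ge5$, i.e.\ the class ${\cal P}_n$ with $n\ge4$.} Now it suffices to show that $H_n$ itself has exponential growth, because $\dim(A_F)_m\ge[t^m]H_n$ for every $A_F\in{\cal P}_n$. Since $g_n$ is a polynomial with $g_n(0)=1>0$ and $g_n(1)=3-n\le-1<0$, the intermediate value theorem produces a real root $\rho_n\in(0,1)$ of $g_n$, hence a pole of $H_n=1/((1-t)g_n(t))$ strictly inside the unit disc (it cannot be cancelled, the numerator being $1$). Therefore the radius of convergence $R_n$ of $\sum_m[t^m]H_n\,t^m$ — which for a rational function equals the distance from $0$ to its nearest pole — satisfies $R_n\le\rho_n<1$, so by Cauchy--Hadamard $\limsup_m\big([t^m]H_n\big)^{1/m}=1/R_n\ge1/\rho_n>1$; as the coefficients $[t^m]H_n$ are nonnegative integers (it is a genuine Hilbert series), they grow exponentially. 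Hence $\dim(A_F)_m$ grows exponentially for every $A_F\in{\cal P}_n$, and at most exponentially since $\dim(A_F)_m\le2^m$, which is the second assertion.

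The computations are all routine once Theorem~\ref{deg4} is granted; the only steps needing care are the algebraic identity $d_n=(1-t)g_n$ together with the evaluation $g_n(1)=3-n$ — this is precisely what makes $g_n$ change sign on $(0,1)$ exactly for $n\ge4$, i.e.\ from potential degree $5$ onwards, while for $n=3$ it merely contributes the extra factor $(1-t)$ and hence polynomial growth — and, for the degree-$4$ claim, using that the minimal series $H_3$ is actually attained, so that the polynomially growing example genuinely exists. I do not expect any real obstacle beyond this bookkeeping.
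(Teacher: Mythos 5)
Your argument is correct, and it follows the route the paper itself intends: everything is read off the minimal Hilbert series $H_n=\frac{1}{1-2t+2t^n-t^{n+1}}$ of Theorem~\ref{deg4}, using that it is both attained (by the algebra of Example~\ref{ex1}) and a coefficientwise lower bound on the whole class ${\cal P}_n$. For the degree-$4$ case you use exactly the paper's factorisation $\frac{1}{1-2t+2t^3-t^4}=\frac{1}{(1+t)(1-t)^3}$; the only cosmetic difference is that you exclude sub-quadratic growth via the partial-fraction asymptotics $[t^m]H_3\sim m^2/4$, whereas the paper argues via the recurrence $a_m=2a_{m-1}-2a_{m-3}-a_{m-4}$ for the coefficients --- both are fine. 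For degrees $\geq 5$ the paper states the exponential-growth claim without spelling out a proof, and your completion is the natural one: the identity $1-2t+2t^n-t^{n+1}=(1-t)\bigl(1-(t+\cdots+t^{n-1})+t^n\bigr)$ with $g_n(1)=3-n<0$ for $n\geq 4$ gives, by the intermediate value theorem, a pole of $H_n$ in $(0,1)$ (no cancellation, the numerator being $1$), hence $\limsup_m([t^m]H_n)^{1/m}>1$ by Cauchy--Hadamard, and the coefficientwise minimality transfers this exponential lower bound to every algebra in ${\cal P}_n$. This is also consistent in spirit with the criterion the paper invokes elsewhere (a sign change of the denominator on $(0,1)$ forcing exponential growth, as in the proof of Theorem~\ref{deg50}). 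I see no gap.
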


As a consequence of Example~\ref{ex1}, in the case of potential of degree 4 we have that the algebra $A_{(3)}=A_{F_{(3)}}$  given by the potential $x^2y^2 \rcirclearrowleft$ (that is by the relations $A_{(3)}=\kxy / \{xy^2+y^2x, x^2y+yx^2 \}$) has a minimal Hilbert series, namely $H_{(3)}=\frac{1}{1-2t+2t^3-t^4}.$ It  has polynomial growth of degree not higher than three by reasons which are obvious if one notices that $\frac{1}{1-2t+2t^3-t^4}=\frac{1}{(1+t)(1-t)^3}$, but exact calculations of the terms $a_n$ of the series $H_{(3)}=\sum a_n t^n$ via the recurrence $a_n=2a_{n-1}-2a_{n-3}-a_{n-4}$ shows that linear growth is impossible.

The above theorem, together with classification of potentials of degree $3$,
will ensure that potential algebras with homogeneous potential of degree $\geq 3$ are
always infinite dimensional.
As a tool for the proof of  Theorem ~\ref{deg4} we construct a complex, in a way analogous to the Koszul
complex. However, not all maps in our complex have degree one. One of the maps has degree $n-2$, where $n$ is
the degree of the potential.


 In Section~\ref{Dim}, we  answer a question of  Wemyss, which was motivated by the recent investigation  of $A_{con}$.
For that we use Gr\"obner basis technique
 and arguments involving truncated algebra $A^{(n)}=A/{\rm span}\{u_n\}$, where $u_n$ are monomials of degree bigger than $n$.
It was shown by Michael Wemyss, that  the completion of a potential algebra can have dimension $8$ and he conjectured that this is the possible minimal dimension. We show that his conjecture is true.

\begin{theorem} Let $A_F$ be a potential algebra given by a potential $F$
having only terms of degree $3$ or higher.
 The minimal dimension of  $A_{F}$  is at least $8$. Moreover,
 the minimal dimension of the completion of $A_{F}$  is $8$.
\end{theorem}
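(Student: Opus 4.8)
The plan is to deduce both claims from a single finite‑dimensional estimate, namely that for every admissible potential $F$ one has $\dim_{\K}A_F^{(4)}\ge 8$, where $A_F^{(4)}=\K\langle x,y\rangle/(I_F+\mathfrak m^5)$ and $\mathfrak m=(x,y)$. Since $A_F^{(4)}$ is a quotient both of $A_F$ and of the completion $\widehat{A_F}$, this one inequality yields $\dim A_F\ge 8$ and $\dim\widehat{A_F}\ge 8$ at once, with no need to treat the infinite‑dimensional case separately; together with Wemyss's explicit $8$‑dimensional completed potential algebra recalled above, it gives that the minimal dimension of the completion is exactly $8$. To estimate $\dim A_F^{(4)}$ I pass to the $\mathfrak m$‑adic associated graded algebra $\operatorname{gr}_{\mathfrak m}A_F=\K\langle x,y\rangle/\operatorname{in}(I_F)$, computed by a degree‑compatible Gr\"obner basis ($\operatorname{in}$ denoting the ideal of lowest‑degree initial forms); writing $\operatorname{gr}_k$ for $\mathfrak m^k/\mathfrak m^{k+1}$ and using that $r_1=\partial F/\partial x$ and $r_2=\partial F/\partial y$ have no linear part, one gets $\dim A_F^{(4)}=1+2+\dim\operatorname{gr}_2+\dim\operatorname{gr}_3+\dim\operatorname{gr}_4$, so it suffices to show $\dim\operatorname{gr}_2+\dim\operatorname{gr}_3+\dim\operatorname{gr}_4\ge 5$.

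The feature of potential algebras that makes this work — and which is not available for a general two‑relator algebra — is that the quadratic components $(r_1)_2$ and $(r_2)_2$ both lie in the three‑dimensional space $V=\langle x^2,\ xy+yx,\ y^2\rangle$ of symmetric quadratic words. This is immediate on writing the cubic part $F_3$ of $F$ in the basis $\{x^3,\ y^3,\ x^2y+xyx+yx^2,\ xy^2+yxy+y^2x\}$ of cyclic cubics and differentiating. Hence $d_2:=\dim\langle(r_1)_2,(r_2)_2\rangle\le 2$ and $\dim\operatorname{gr}_2=4-d_2\ge 2$; moreover, since $A_F\cong A_{gF}$ for a linear change $g$ of the generators, one may normalise $F_3$ so that $\langle(r_1)_2,(r_2)_2\rangle$ is one of a short list of subspaces of $V$.

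I then split on $d_2$. If $d_2=0$ (i.e.\ $F$ begins in degree $\ge 4$), then $\dim\operatorname{gr}_2=4$ and $\operatorname{in}(I_F)_3\subseteq\langle(r_1)_3,(r_2)_3\rangle$, so $\dim\operatorname{gr}_3\ge 6$ and the total is at least $13$. If $d_2=1$, writing $\langle(r_1)_2,(r_2)_2\rangle=\langle q\rangle$, the degree‑$3$ part of the initial ideal is spanned by $\mathfrak m_1 q+q\mathfrak m_1$ together with the single new initial form of the unique (up to scalar) combination of $r_1,r_2$ whose quadratic part vanishes, so $\dim\operatorname{in}(I_F)_3\le 5$, $\dim\operatorname{gr}_3\ge 3$, and the total is at least $9$. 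The essential case is $d_2=2$, where $\dim\operatorname{gr}_2=2$. Because $(r_1)_2,(r_2)_2$ are linearly independent, the only scalar combination of $r_1,r_2$ with vanishing quadratic part is zero, whence $\operatorname{in}(I_F)_3=\mathfrak m_1U+U\mathfrak m_1$ exactly, with $U=\langle(r_1)_2,(r_2)_2\rangle$; a direct check on representatives of the two orbits of admissible planes $U\subseteq V$ (that of $\langle x^2,y^2\rangle$, from a cubic form with nonzero discriminant, and that of $\langle x^2,\ xy+yx\rangle$, from a cubic with a double root) gives $\dim(\mathfrak m_1U+U\mathfrak m_1)=6$, hence $\dim\operatorname{gr}_3=2$. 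Finally $\operatorname{in}(I_F)_4$ contains the $14$‑dimensional space $\mathfrak m_2U+\mathfrak m_1U\mathfrak m_1+U\mathfrak m_2$, and one shows that every remaining degree‑$4$ initial form is the leading part $x(r_i)_3-(r_i)_3x$ (or its $y$‑analogue) of a commutator $[x,r_i]$ or $[y,r_i]$ — all products $r_ir_j$, and all forms built from the $(r_i)_2$, already lie in the $14$‑dimensional space — and that modulo that space these leading parts span at most one dimension: in the first orbit they are all multiples of $\overline{xyxy-yxyx}$, while in the second orbit the relevant complement contains $\overline{y^4}$, which no initial form can reach since $(r_1)_2,(r_2)_2$, and hence all these leading parts, involve the letter $x$. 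Thus $\dim\operatorname{in}(I_F)_4\le 15$, $\dim\operatorname{gr}_4\ge 1$, and the total is at least $1+2+2+2+1=8$.

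The main obstacle is precisely the last step in the case $d_2=2$: one must pin down exactly which order‑$4$ elements of $I_F$ can produce new initial forms in degree $4$, and then carry out the combinatorial verification that, modulo the obvious $14$‑dimensional part, the leading parts of the relevant commutators span only one dimension. This is where cyclic invariance of $F$ enters — it links $(r_1)_3=\partial F_4/\partial x$ to $(r_2)_3=\partial F_4/\partial y$ through the coefficients of the quartic part $F_4$ — and it is what forces $\dim\operatorname{gr}_4\ge 1$, hence $\dim A_F^{(4)}\ge 8$. (The bound $8$ need not be attained by $A_F$ itself for a polynomial potential, but is attained by Wemyss's completed example, which gives the ``moreover'' part.)
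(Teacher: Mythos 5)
Your proposal is correct and follows essentially the same route as the paper: truncate modulo all terms of degree $\geq 5$, use that the quadratic parts of $\partial_x F,\partial_y F$ lie in $\langle x^2,\,xy+yx,\,y^2\rangle$, count surviving dimensions degree by degree ($1+2+2+2+1$), and use the syzygy $[x,\partial_x F]+[y,\partial_y F]=0$ coming from cyclic invariance to show that degree $4$ contributes at most one relation beyond the homogeneous $14$-dimensional part, with Wemyss's example supplying the upper bound $8$ for the completion. One small caveat: in your second orbit the stated reason that $\overline{y^4}$ is unreachable (``all these leading parts involve the letter $x$'') is not quite valid, since $(r_2)_3$ may contain the monomial $y^3$; the correct and immediate fix, already implicit in your own characterization, is that the surviving leading parts are commutators $[x,\cdot]$ and $[y,\cdot]$, whose $y^4$-coefficient always cancels.
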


We  recall some basic information about $A_{con}$ and about the  completion  of an algebra  in Section ~\ref{Dim2}.
In Section 6   we consider the conjecture formulated by Wemyss and Donovan in
\cite{WD}.
The conjecture says that  the difference between the dimension of a potential algebra and its abelianization is a linear combination of squares of natural numbers starting from 2, with non-negative integer coefficients.
Moreover, in \cite{T} Toda prove that in geometric case these integer coefficients do coincide with Gopakumar-Vafa invariants \cite{Ka}.
We give an example of solution of the conjecture
for one particular type of potential, namely for the potential
$F=x^2y+xyx+yx^2+xy^2+yxy+y^2x+a(y)=x^2y+y^2x+a(y) \rcirclearrowleft,$ where $a=\sum_{j=3}^n a_jy^j\in\K[y]$ is of degree $n>3$ and
has only terms of degree $\geq3$.

\section{Estimates from the Golod-Shafarevich theorem}\label{GS}

In this section we get the following  estimate: if a potential $F$ has only terms of degree 5 or higher, then $A_F$
is infinite-dimensional. We obtain it by applying an improved version
of the Golod-Shafarevich theorem, for not necessarily homogeneous algebras \cite{Z,V},
and additionally incorporating the fact that relations arise from a the potential.

We start by showing, for comparison purposes, that straightforward application of classical version of
the Golod-Shafarevich theorem gives infinite dimensionality of algebra  for not necessarily homogeneous case,
for potentials, having only terms of degree 7 or higher, and for homogeneous potentials of degree $\geq 6$.

First, we recall the Golod--Shafarevich theorem.

\begin{thmgs} Let $A={\mathbb K}\langle x_,\dots,x_d\rangle/{\rm Id}(g_1,g_2,\dots)$, where
each $g_j$ is homogeneous of degree $\geq2$ and assume that non-negative integers $s_2,s_3,\dots$
are such that for each $k\geq2$, the number of the relations $g_j$ of degree $k$ does not exceed $s_k$.
Then the Hilbert series $H_A$ of $A$ satisfies the following lower estimate:
$$
H_A\geq \left|\frac{1}{1-dt+s_2t^2+s_3t^3+\dots}\right|,
$$
where the order on power series is coefficient-wise:
$H=\sum h_jt^j\geq G=\sum g_jt^j$ if $h_j\geq g_j$ for all $j$, and $|H|$ is the series obtained
from $H$ by replacing with $0$th all coefficients starting from the first negative one.
(If all coefficients of $H$ are non-negative, then obviously $|H|=H$).
\end{thmgs}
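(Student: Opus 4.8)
The plan is to run the classical homological argument behind the Golod--Shafarevich inequality, keeping every comparison of Hilbert series coefficientwise. I would write $R=\K\langle x_1,\dots,x_d\rangle$ with its natural grading, $\mathfrak r=R_{\geq 1}$ for the augmentation ideal, $V=R_1$, $I=\mathrm{Id}(g_1,g_2,\dots)$, $A=R/I$ and $\mathfrak a=A_{\geq 1}$. To absorb the bounds $s_k$ into a single object, for each $k$ I would choose a subspace $W_k\subseteq R_k$ spanning the linear span of the degree-$k$ relations among the $g_j$, so that $\dim W_k\leq s_k$, and set $W=\bigoplus_k W_k$. Then $I$ is the two-sided ideal generated by $W$, and since every $g_j$ has degree $\geq 2$ we have $W\subseteq\mathfrak r^2\subseteq VR$.

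The core of the proof is the exact sequence of graded right $A$-modules
\[
W\otimes_{\K}A \xrightarrow{\ \ } V\otimes_{\K}A \xrightarrow{\ \ } A \xrightarrow{\ \varepsilon\ } \K\to 0 .
\]
To get it I would first use the right $R$-module isomorphism $\mathfrak r\cong V\otimes_{\K}R$ (every monomial of positive degree begins with a unique letter), tensor $0\to I\to R\to A\to 0$ by $V\otimes_{\K}-$ over $\K$ to get $V\otimes_{\K}A\cong\mathfrak r/VI$, and then compose with $\mathfrak r/VI\twoheadrightarrow\mathfrak r/I=\mathfrak a$ to identify the kernel of $V\otimes_{\K}A\to A$ with $I/VI$. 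For the left end, the decomposition $R=\K\oplus VR$ gives $I=WR+VI$, so there is a surjection $W\otimes_{\K}A\twoheadrightarrow I/VI$, well defined because $W\subseteq VR$ forces $WI\subseteq VI$. Splicing yields the displayed complex, exact except possibly at the left term.

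Passing to Hilbert series, additivity of $H(-)$ on short exact sequences together with $H_{V\otimes A}=dt\,H_A$ and the surjection above gives
\[
1-(1-dt)H_A(t)=H_{I/VI}(t)\leq H_{W\otimes A}(t)=H_W(t)\,H_A(t)\leq\Big(\sum_{k\geq 2}s_kt^k\Big)H_A(t),
\]
hence $\big(1-dt+\sum_{k\geq 2}s_kt^k\big)H_A(t)\geq 1$ coefficientwise. To reach the stated form, put $Q(t)=1-dt+\sum_{k\geq 2}s_kt^k$ and $P(t)=1/Q(t)$, and write $H_AQ=1+C$ with $C=\sum_{n\geq 1}c_nt^n$, $c_n\geq 0$; then $H_A=P+CP$, and comparing coefficients in each degree below the first degree at which $P$ has a negative coefficient (where all the needed coefficients of $P$, hence the corresponding coefficient of $CP$, are $\geq 0$) gives $H_A\geq|P|$, the remaining coefficients of $H_A$ being nonnegative for free.

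I expect the main obstacle to be the second step: setting up the middle exact sequence correctly --- in particular the identification of the kernel of $V\otimes_{\K}A\to A$ with $I/VI$ and the construction of the surjection from $W\otimes_{\K}A$ --- since this is where the one-sided module structure has to be tracked consistently, and where the inequality rather than equality enters, via the slack $\dim W_k\leq s_k$ and the possible non-exactness at the left term. The final power-series bookkeeping is routine but also not automatic: one cannot just divide by $Q$, because $Q$ has a negative coefficient, so the passage from $QH_A\geq 1$ to $H_A\geq|1/Q|$ must be done by coefficient comparison as indicated.
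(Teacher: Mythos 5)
Your proposal is correct, and the key steps all check out: the identification $V\otimes_{\K}A\cong\mathfrak r/VI$ with kernel $I/VI$ of the multiplication map (using that $I\subseteq\mathfrak r$ because the $g_j$ have degree $\geq 2$), the decomposition $I=WR+VI$ giving the graded surjection $W\otimes_{\K}A\twoheadrightarrow I/VI$ (well defined since $WI\subseteq VI$), the resulting coefficientwise inequality $\bigl(1-dt+\sum_{k\geq2}s_kt^k\bigr)H_A\geq 1$, and the careful degree-by-degree passage from $QH_A\geq 1$ to $H_A\geq|1/Q|$, which is indeed necessary because $Q$ has a negative coefficient. Note, however, that the paper does not prove Theorem GS at all: it is quoted as a known classical result with a citation to Golod--Shafarevich (and the nonhomogeneous refinements to Vinberg and Zelmanov), so there is no in-paper argument to compare against; your proof is the standard homological one via the four-term exact sequence $W\otimes_{\K}A\to V\otimes_{\K}A\to A\to\K\to 0$, and it would serve as a complete self-contained justification of the statement as used in Section 2.
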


\begin{proposition}\label{pro1}
Let $F$ be a (not necessarily homogeneous) potential starting with degree $n+1$ with $n\geq 6$
(that is, $F_j=0$ for $j\leq n$). Then $A_F$ is infinite dimensional.
\end{proposition}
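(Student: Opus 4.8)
The plan is to reduce the statement to the Golod--Shafarevich inequality applied to the associated graded algebra of $A_F$; one cannot apply Theorem GS to $A_F$ directly, because the relations $\partial F/\partial x$, $\partial F/\partial y$ are in general not homogeneous. First I would record two elementary facts. Since $F$ has no homogeneous components in degrees $\leq n$, and each of $\partial/\partial x$, $\partial/\partial y$ lowers the degree of a monomial by one (or annihilates it), both defining relations lie in the $n$-th power of the augmentation ideal of $\K\langle x,y\rangle$; that is, each has order $\geq n$, and at least one is nonzero (as $F$ is not constant), so there are at most two relations. Second, writing $\mathfrak m$ for the augmentation ideal of $A_F$ and $\mathrm{gr}\,A_F=\bigoplus_{i\geq0}\mathfrak m^i/\mathfrak m^{i+1}$, one has $\dim A_F\geq\dim(\mathrm{gr}\,A_F)$, since the partial sums $\sum_{i<N}\dim(\mathfrak m^i/\mathfrak m^{i+1})=\dim(A_F/\mathfrak m^N)$ are all $\leq\dim A_F$. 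Hence it suffices to prove $\mathrm{gr}\,A_F$ is infinite dimensional — and this algebra is presented on two generators by a homogeneous ideal all of whose elements have degree $\geq n$.

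I would then invoke the non-homogeneous (filtered) form of the Golod--Shafarevich theorem, due to Vinberg and Zelmanov (\cite{Z,V}): if $A=\K\langle x_1,\dots,x_d\rangle/{\rm Id}(g_1,g_2,\dots)$ with each $g_j$ of order $d_j\geq2$, and $s_i$ bounds the number of $j$ with $d_j=i$, then $\dim(\mathrm{gr}\,A)_m-d\,\dim(\mathrm{gr}\,A)_{m-1}+\sum_{i\geq2}s_i\,\dim(\mathrm{gr}\,A)_{m-i}\geq0$ for all $m$; equivalently, $H_{\mathrm{gr}\,A}(t)\,\bigl(1-dt+\sum_{i\geq2}s_it^i\bigr)\geq1$ coefficient-wise. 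In our situation $d=2$ and there are at most two relations, each of order $\geq n$, so the inequality holds with $g(t):=1-2t+2t^n$; the actual relation orders can only be $\geq n$, and for $0<t<1$ that only decreases $g$, so this is the worst case.

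The proof is then finished by an elementary estimate. If $A_F$, hence $\mathrm{gr}\,A_F$, were finite dimensional, then $H:=H_{\mathrm{gr}\,A_F}$ would be a polynomial with non-negative coefficients and $H(0)=1$, so $H(t_0)\geq1$ for every $t_0\in(0,1)$; evaluating the coefficient-wise inequality at such a $t_0$ would force $g(t_0)\geq1/H(t_0)>0$. But for $n\geq6$ the polynomial $g$ is negative at a point of $(0,1)$: at $t_0=7/10$ we get $g(t_0)=-0.4+2\,(7/10)^n\leq-0.4+2\,(7/10)^6<-0.16<0$. This contradiction shows $A_F$ is infinite dimensional. (More sharply, $g$ attains its minimum on $(0,\infty)$ at $t^*=n^{-1/(n-1)}\in(0,1)$, with value $1-2(n-1)n^{-1/(n-1)}/n$, which is negative already for $n\geq5$; but the crude choice above suffices for the stated hypothesis.)

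The step I expect to be the real obstacle is this passage to the non-homogeneous setting. The homogeneous ideal defining $\mathrm{gr}\,A_F$ — the ideal of initial forms of $I_F$ — may need generators in infinitely many degrees $n,n+1,n+2,\dots$, not merely the two initial forms of $\partial F/\partial x$ and $\partial F/\partial y$, so one may not simply apply Theorem GS to $\mathrm{gr}\,A_F$ with $s_n=2$ and $s_i=0$ for $i>n$. What rescues the argument is exactly the filtered Golod--Shafarevich theorem of Vinberg--Zelmanov, whose estimate is phrased in terms of the generators of the original ideal $I_F$ weighted by their orders — here just the two partial derivatives, of order $\geq n$. With that result in hand, the remainder is only the numerical check on $g$.
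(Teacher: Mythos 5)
Your proposal is correct, but it proves the proposition by a genuinely different route than the paper does. The paper's proof replaces $A_F$ by the graded quotient $\widehat A$ obtained by imposing \emph{all homogeneous components} of $\partial F/\partial x$ and $\partial F/\partial y$ as relations; this costs two relations in every degree $\geq n$, and the classical graded Theorem GS then gives $H_{\widehat A}\geq \bigl|\frac{1-t}{1-3t+2t^2+2t^n}\bigr|$, whose coefficients are positive exactly when $n\geq 6$. You instead keep just the two relations, counted by their orders ($\geq n$), pass to $\mathrm{gr}_{\mathfrak m}A_F$, and invoke the filtered (Vinberg--Zelmanov) form of the theorem together with the evaluation criterion at a point $t_0\in(0,1)$ where $1-2t+2t^n$ is negative --- which is precisely the tool the authors themselves deploy later, in the proof of Theorem~\ref{deg50}, citing \cite{Z} (p.~1187) and \cite{E}. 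What each approach buys: the paper's homogenization needs only the classical graded statement but inflates the relation count, which is why it stops at $n\geq 6$; your version needs the non-graded statement but keeps two relations, gives exponential growth rather than bare infinite-dimensionality, and in fact (as your sharper remark about the minimum of $1-2t+2t^n$ shows) already covers $n\geq 5$, i.e.\ the non-homogeneous analogue of Proposition~\ref{pro2} --- the paper only gestures at this alternative in the sentence ``the same estimate follows from Vinberg's generalization \cite{V}.'' Two small cautions: the coefficient-wise inequality $H_{\mathrm{gr}A}(t)\bigl(1-dt+\sum_i s_it^i\bigr)\geq 1$ with relations weighted by order is the completed/filtered form of the inequality, so you should cite it in the form actually proved in \cite{Z,E} (only the evaluation consequence at $t_0\in(0,1)$ is needed, and that is exactly what those sources state); and, as you yourself note, the worst-case replacement of the true orders $m_1,m_2\geq n$ by $n$ is legitimate only at the evaluation step, since $t_0^{m_i}\leq t_0^n$ for $t_0\in(0,1)$, not coefficient-wise --- your write-up handles this correctly.
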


\begin{proof} Consider the algebra $\widehat A$ given by the generators $x,y$ and the relations
being all homogeneous components of the relations $\frac{\partial F}{\partial x}$ and
$\frac{\partial F}{\partial y}$ of $A=A_F$. Clearly $\widehat A$ is a quotient of $A$ and therefore $A$
is infinite dimensional provided $\widehat A$ is. Clearly, $\widehat A$ satisfies the conditions of the
Golod--Shafarevich Theorem with $k=2$, $s_j=0$ for $j < n$, and $s_j=2$ for $j\geq n$, that is
we have at most two relations of each degree $\geq n$. Thus the theorem yields
$$
H_{\widehat A}\geq \left|\frac{1}{1-2t+2t^n+2t^{n+1}+\dots}\right|=\left|\frac{1-t}{1-3t+2t^2+2t^n}\right|.
$$

One can check that all coefficients of the series given by last rational function are positive if $n\geq 6$
and that the said series has negative coefficients if $n\leq 5$. Thus $A$ is infinite dimensional if $n\geq 6$.
\end{proof}

Note that the same estimate follows from Vinberg's generalization \cite{V} of the Golod--Shafarevich theorem.
If $F$ is homogeneous, a slightly better estimate follows. Surprisingly, it is not that much better.

\begin{proposition}\label{pro2}
Let $F$ be a homogeneous potential of degree $n+1$ with $n\geq 5$. Then $A_F$ is infinite dimensional.
\end{proposition}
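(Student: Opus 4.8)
The plan is to mimic the proof of Proposition~\ref{pro1}, but to exploit homogeneity so that we only ever see relations in the single degree $n$ rather than a tail of relations in all degrees $\geq n$. Concretely: if $F$ is homogeneous of degree $n+1$, then $\frac{\partial F}{\partial x}$ and $\frac{\partial F}{\partial y}$ are each homogeneous of degree exactly $n$, so the algebra $A_F$ itself (no passage to an associated graded $\widehat A$ is needed) already satisfies the hypotheses of the Golod--Shafarevich theorem with $d=2$, $s_n\leq 2$, and $s_j=0$ for all $j\neq n$. Hence
$$
H_{A_F}\geq\left|\frac{1}{1-2t+2t^{n}}\right|.
$$

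The next step is the purely elementary verification that the power series expansion of $\frac{1}{1-2t+2t^n}$ has all coefficients non-negative (indeed positive) when $n\geq 5$; once that is established, $|{\cdot}|$ does nothing, the lower bound is an honest infinite power series with infinitely many positive coefficients, and $A_F$ is infinite dimensional. For the coefficient positivity I would argue from the recurrence: writing $\sum_k c_k t^k=\frac{1}{1-2t+2t^n}$ we get $c_0=1$, $c_1=2$, and $c_k=2c_{k-1}-2c_{k-n}$ for $k\geq 1$ (with $c_j=0$ for $j<0$). For $1\le k\le n-1$ this just gives $c_k=2^k$, and at $k=n$ we have $c_n=2c_{n-1}-2c_0=2^n-2>0$; one then shows by induction that $c_k\geq 2c_{k-n}$ never goes negative, e.g. by checking that $c_{k-1}\geq c_{k-n}$ holds for all relevant $k$ when $n\geq 5$ (the gap of $n-1\geq 4$ steps, during which the sequence at least doubles several times early on, is enough of a cushion; for $n\leq 4$ the cushion fails, e.g. $\frac{1}{1-2t+2t^4}$ has a negative coefficient, consistently with the theorem only being claimed for $n\geq 5$). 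It may be cleanest to instead factor or estimate: note $1-2t+2t^n=(1-2t)+2t^n$, so $\frac{1}{1-2t+2t^n}=\sum_{m\geq 0}(-2)^m t^{nm}\frac{1}{(1-2t)^{m+1}}$, and compare the $t^k$-coefficient of consecutive blocks to see the alternating contributions are dominated in absolute value by the preceding one precisely when $n\geq 5$.

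The main obstacle is exactly this coefficient-positivity computation: it is the only non-formal content, and one has to be careful about the boundary case, since the statement is false for $n=4$. I would present the recurrence argument with an explicit inductive hypothesis strong enough to carry through — for instance, prove simultaneously that $c_k>0$ and $c_k\geq c_{k+1-n}$ (equivalently $c_k$ is ``slowly decreasing on the scale $n$'') for all $k\geq 0$ — and then simply remark, as the proposition's statement and the surrounding discussion already flag, that the analogous series for $n\leq 4$ acquires a negative coefficient, so this is the best one gets from Golod--Shafarevich alone and genuinely requires the sharper potential-specific input used for Theorem~\ref{deg5}.
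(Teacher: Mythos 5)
Your proposal is correct and follows essentially the same route as the paper: since the two relations $\partial_xF$ and $\partial_yF$ are homogeneous of degree $n$, the Golod--Shafarevich theorem is applied directly to $A_F$ with $s_n=2$ and $s_j=0$ otherwise, giving $H_{A_F}\geq\left|\frac{1}{1-2t+2t^n}\right|$, and the paper then simply asserts (``one can check'') the coefficient positivity for $n\geq 5$ that you work out via the recurrence $c_k=2c_{k-1}-2c_{k-n}$. The only caveat is that your suggested invariant $c_k\geq c_{k+1-n}$ is not obviously self-propagating; a ratio-type hypothesis such as $c_k\geq\tfrac32\,c_{k-1}$ (which forces $c_{k-n}\leq(2/3)^{n-1}c_{k-1}$ and closes the induction precisely because $n\geq5$) is the cleaner way to finish the check you correctly identify as the only non-formal step.
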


\begin{proof} Clearly, $A$ satisfies the conditions of the Golod--Shafarevich Theorem with $k=2$, $s_n=2$ and
$s_j=0$ for $j\neq n$ (we have  two relations of degree $n$). Thus the theorem yields
$$
H_{A}\geq \left|\frac{1}{1-2t+2t^n}\right|.
$$

One can check that all coefficients of the series given by the last rational function are positive if $n\geq 5$
and that the said series has negative coefficients if $n\leq 4$. Thus $A$ is infinite dimensional if $n\geq 5$,
that is, for potentials of degree $6$ and higher.
\end{proof}

\begin{theorem}\label{deg50}
Let $A_F$ be a potential algebra given by a not necessarily homogeneous potential $F$ having only terms of
degree $5$ or higher. Then $A_F$ is infinite dimensional.
\end{theorem}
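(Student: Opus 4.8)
The plan is to deduce the theorem from a sharpening of the Golod--Shafarevich estimate that uses, beside the two defining relations of $A_F$, one further relation among them which is present for every potential. Write $f_x=\partial F/\partial x$, $f_y=\partial F/\partial y$. Directly from the definition of the derivations, for every monomial $w$ of positive degree one has $x\,\partial_x w+y\,\partial_y w=w$, while $(\partial_x w)\,x+(\partial_y w)\,y$ equals the cyclic rotation of $w$ (the first letter moved to the end). Extending by linearity and using that $F$ is cyclically invariant, hence fixed by this rotation, the second expression applied to $F$ returns $F$; therefore
$$x f_x+y f_y\ =\ F\ =\ f_x\,x+f_y\,y\qquad\text{in }\K\langle x,y\rangle ,$$
and subtracting yields the potential syzygy
$$[x,f_x]+[y,f_y]\ =\ 0 .$$
Since $F$ has only terms of degree $\geq 5$, it starts in some degree $n+1$ with $n\geq 4$; then $f_x,f_y$ have order $n$, and, decomposed into homogeneous components, the above identity gives in every degree $\geq n+1$ a relation with linear coefficients among the homogeneous components of $f_x$ and $f_y$.

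The main step is to prove and then apply a form of the Golod--Shafarevich theorem for not necessarily homogeneous algebras in which the contribution of such a syzygy is subtracted off, with the aim of getting
$$H_{A_F}(t)\ \geq\ \Bigl|\tfrac{1}{1-2t+2t^{n}-t^{n+1}}\Bigr| .$$
The scheme is the usual one --- pass to the graded algebra $\widehat A$ generated by all homogeneous components of $f_x,f_y$, exactly as in the proof of Proposition~\ref{pro1}, and estimate its Hilbert series through the beginning of a free resolution of the trivial module --- but one now also adjoins the free term carried by the homogeneous components of the potential syzygy, which the identity above guarantees are genuine cycles, so the denominator of the Golod--Shafarevich series acquires an extra negative term. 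I expect this step to be the real obstacle. The crude bookkeeping (at most two relations in every degree $\geq n$, at least one syzygy in every degree $\geq n+1$) produces only the denominator $\tfrac{1-3t+2t^{2}+2t^{n}-t^{n+1}}{1-t}$, and a direct check of the associated recurrence shows its series is positive for $n\geq 5$ but acquires a negative coefficient --- first in degree $10$ --- when $n=4$. So for the critical case $n=4$ one must spend the potential structure more economically, charging the higher homogeneous components of the relations against the higher components of the syzygy, so that effectively only the leading relations and the leading syzygy are felt; turning this cancellation into a precise inequality is exactly the improved Golod--Shafarevich theorem referred to in the introduction.

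Finally, a coefficient computation finishes the proof: for $n\geq 4$ the series $\tfrac{1}{1-2t+2t^{n}-t^{n+1}}$ has all coefficients positive and is unbounded, so the modulus sign is vacuous and $A_F$ is infinite-dimensional. When $n=4$ the denominator factors as $(1-t)(1-t-t^{2}-t^{3}+t^{4})$, whose smallest root is $\approx 0.58<1$, so the coefficients --- which begin $1,2,4,8,14,25,44,76,132,\dots$ and obey $a_i=2a_{i-1}-2a_{i-4}+a_{i-5}$ --- grow exponentially; for $n\geq 5$ the terms $2t^{n}-t^{n+1}$ perturb $\tfrac{1}{1-2t}$ only in high degrees and positivity is immediate. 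Comparison with Proposition~\ref{pro1} and Proposition~\ref{pro2} shows that the classical Golod--Shafarevich theorem, which sees only the number and the degrees of the relations, does not reach the case $n=4$, so the hypothesis that $A_F$ comes from a potential is genuinely used.
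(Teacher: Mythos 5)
Your proposal has a genuine gap at exactly the point you flag yourself: the ``improved Golod--Shafarevich theorem with a syzygy term'' that would give $H_{A_F}\geq\bigl|\tfrac{1}{1-2t+2t^{n}-t^{n+1}}\bigr|$ for a \emph{non-homogeneous} potential is never proved, and it is the entire content of the theorem in the critical case $n=4$. Subtracting a syzygy from the denominator is not a formal bookkeeping step: the classical bound comes from exactness of $A^{2}\to A^{2}\to A\to\K\to 0$ at the three rightmost spots, and to subtract the syzygy one needs, in addition, control of the map $u\mapsto(xu,yu)$ feeding the syzygy module (e.g.\ its injectivity), which is false or unknown for a general potential algebra and is precisely what requires work. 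In the paper this is done only in the homogeneous setting (Lemma~\ref{le2}, Example~\ref{ex1}, Proposition~\ref{minser}): exactness of the potential complex is verified for one explicit algebra via a Gr\"obner basis, and minimality of the series over the whole class ${\cal P}_n$ is then obtained by a Zariski-genericity/semicontinuity argument (Lemma~\ref{maxra}). None of that transfers to the filtered, non-homogeneous situation by ``charging higher components of the relations against higher components of the syzygy''; your own computation showing that the crude version fails for $n=4$ (negative coefficient in degree $10$) is evidence that the cancellation you need is not available at the level of series alone.

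The paper's actual proof of Theorem~\ref{deg50} sidesteps any refined series estimate. It introduces the auxiliary algebra $B=\K\langle x,y\rangle/J$ with $J=(G,Hx)$, where $G=\partial_xF$, $H=\partial_yF$: multiplying the second relation by $x$ raises its degree, so the ordinary non-homogeneous Golod--Shafarevich series $1-2t+t^{4}+t^{5}$ is negative at some $t_0\in(0,1)$ and $B$ has exponential growth. The potential structure enters only through the syzygy of Lemma~\ref{le1}, rewritten as the commutation rule $Hy=yH-xG+Gx$, which is used to prove the ideal inclusion $Ix\subset J$. Then, if $A_F$ were finite dimensional, $\K\langle x,y\rangle x/Ix$ and hence $B\bar x$ would be finite dimensional, and the decomposition $B={\rm Alg}(\bar y)+B\bar x\,{\rm Alg}(\bar y)$ would force $B$ to have linear growth, contradicting its exponential growth. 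So the syzygy is spent on an ideal-theoretic inclusion and a growth dichotomy, not on improving the Hilbert-series denominator; to salvage your route you would have to prove the non-homogeneous analogue of the exactness/injectivity statements behind Proposition~\ref{minser}, which is a substantially harder task than the theorem itself.
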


\begin{proof} Recall that $A_F=\K\langle x,y\rangle/I$, where $I$ is the ideal generated by
$G=\frac{\partial F}{\partial x}$ and $H=\frac{\partial F}{\partial y}$. Consider
the algebra $B=\K\langle x,y\rangle/J$, where $J$ is the ideal generated by $G$ and $Hx$.
The series from the Golod-Shafarevich theorem for $B$ is $G_B(t)=1-2t^2+t^4+t^5$ since $J$ is an ideal given by
the relation of minimal degree 4 ($s_4=1$) and one relation of minimal degree 5 ($s_5=1$). We apply the non-homogeneous version of the
Golod--Shafarevich theorem from \cite{Z} (page 1187).  Note that there is $t_0\in (0,1)$ such that
$G_B(t_0)<0$. For instance, one can take $t_0=0.654$. From this it follows that $B$ is not only infinite
dimensional but has exponential growth,  see \cite{E}, Theorem~2.7, p.10 for details.

Next we show that $Ix\subset J$. Indeed, $Ix$ is spanned (as a vector space) by $m_1Hm_2x$ and
$m_1Gm_2x$, where $m_1,m_2$ are monomials from $\K\langle x,y\rangle$. An element of the second type
$m_1Gm_2x$ belongs to $J$ since $G\in J$ and $J$ is an ideal. For elements of the first type, we need
to show that they can be expressed as linear combinations of elements of the second type and elements
of the first type containing $Hx$ (with $m_2$ starting with $x$). Indeed, this will suffice since
$Hx\in J$ and $J$ is an ideal. For this purpose we can use the commutation relation $Hy=yH-xG+Gx$ obtained
from the syzygy $[H,y]+[G,x]=0$ (see Lemma~\ref{le1}). So, we use here the fact that our relations $G$ and $H$ are not arbitrary but
are coming from a potential. After applying this commutation relation repeatedly to $Hm_2x$, we can pull
all $y$ with which $m_2$  might start to the left of $H$ ensuring the presence of $Hx$. Hence $Ix\subset J$.

The last step is the following. We suppose that $A_F=\K\langle x,y\rangle/I$ is finite dimensional.
Then the quotient (of vector spaces) $\K\langle x,y\rangle x/Ix$ is also finite dimensional. 


Then $B \bar x=\K\langle x,y\rangle x/J'$ with $J'=J\cap \K\langle x,y\rangle x$, and $\bar x=x+J$, is also finite dimensional
because $Ix\subseteq J'=J\cap \K\langle x,y\rangle x$. But $B$ can be presented as
$$
B={\rm Alg}(\bar y)+B \bar x+B \bar x \bar y+B \bar x \bar y^2+\dots={\rm Alg}(\bar y)+B \bar x{\rm Alg}(\bar y),
$$
where ${\rm Alg}(\bar y)$ is the subalgebra of $B$ generated by $\bar y$. Since $B \bar x$ is finite dimensional,
it follows that $B$ has linear growth. However, this contradicts  the fact that $B$ has exponential
growth obtained in the first part of the proof.
\end{proof}

\section{Homogeneous potential \label{Hom}}\rm

Here we consider the question on infinite-dimensionality of potential algebras in homogeneous case.
This will be the basis for the non-homogeneous arguments as well.

\begin{theorem}\label{deg3} If $F\in\K\langle x,y\rangle$ is a homogeneous potential of degree $3$, then
the potential algebra
$A_F=\K\langle x,y\rangle/{\rm Id}(\frac{\partial F}{\partial x},\frac{\partial F}{\partial y})$ is
infinite dimensional.
\end{theorem}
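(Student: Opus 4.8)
The plan is to reduce the problem to a short, explicit case analysis. A homogeneous cyclically invariant potential $F$ of degree $3$ in $\K\langle x,y\rangle$ is a linear combination of the cyclic sums $x^3\rcirclearrowleft = x^3$, $y^3\rcirclearrowleft = y^3$, $x^2y\rcirclearrowleft = x^2y+xyx+yx^2$ and $xy^2\rcirclearrowleft = xy^2+yxy+y^2x$, so $F = \alpha x^3 + \beta y^3 + \gamma(x^2y+xyx+yx^2) + \delta(xy^2+yxy+y^2x)$ for scalars $\alpha,\beta,\gamma,\delta$. First I would compute the two relations explicitly: $\partial F/\partial x = \alpha x^2 + \gamma(xy+yx) + \delta y^2 + \gamma x^2$-type terms — more precisely $\partial F/\partial x = \alpha x^2 + \gamma(2xy+yx)$ wait, one must be careful: $\partial(x^2y)/\partial x = xy$, $\partial(xyx)/\partial x = yx$, $\partial(yx^2)/\partial x = 0$, and similarly for the other cyclic sum, so $G := \partial F/\partial x = \alpha x^2 + \gamma(xy+yx) + \delta y^2$ and symmetrically $H := \partial F/\partial y = \beta y^2 + \delta(xy+yx) + \gamma x^2$ (up to checking the constants, which is routine). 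Thus both relations are quadratic, and $A_F$ is a quadratic algebra on two generators with (generically) two relations.

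Next I would exploit the action of $\mathrm{GL}_2(\K)$ (or at least $\mathrm{PGL}_2$): a linear change of the variables $x,y$ transforms $F$ into another degree-$3$ potential and $A_F$ into an isomorphic algebra, so it suffices to treat one representative in each orbit. The space of symmetric bilinear-type data here is governed by the binary cubic form obtained by abelianizing $F$, together with the degenerate cases where $F$ lies in a smaller subspace; I would use the classical classification of binary cubic forms over $\K$ (which, over an algebraically closed field, have normal forms $x^3$, $x^2y$, $x^3+y^3$ i.e. $xyz$-type, $0$, etc.) to cut the analysis down to a handful of normal forms for $(\alpha,\beta,\gamma,\delta)$. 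For each normal form I would either (i) exhibit an explicit infinite family of linearly independent monomials in $A_F$ — typically one of the generators, say $\bar x$, is not nilpotent because the relations do not allow a leading term like $x^k$ to be rewritten, so $\{\bar x^n\}_{n\ge 0}$ or $\{\bar x^n \bar y\}$ stays independent — or (ii) compute a Gröbner basis of the ideal $I_F$ under a suitable monomial order and read off that the set of normal words is infinite. For the genuinely two-relation cases the quotient is a quadratic algebra whose Hilbert series I would compute directly (the paper promises such a computation), and one checks $H_{A_F}(t)$ is a power series with all coefficients positive, in fact its first terms force $\dim A_F = \infty$.

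The main obstacle I expect is organizing the case analysis so that it is genuinely complete and not merely a sampling of normal forms: one must be sure that every $(\alpha,\beta,\gamma,\delta)$, including the ``mixed'' degenerate cases (e.g. $\gamma=\delta=0$ but $\alpha,\beta\neq0$, or $\alpha=\beta=\gamma=0$, $\delta\neq0$, etc.), is covered by some $\mathrm{GL}_2$-orbit representative, and that the Gröbner basis computation in each case actually terminates with a description of the normal words valid in all degrees (as opposed to an infinite, never-closing rewriting system). A secondary subtlety is that $\partial/\partial x$ and $\partial/\partial y$ are only ``honest'' derivations on cyclically invariant elements, so throughout one must keep $F$ cyclically symmetric and use the syzygy $[\,\partial F/\partial y,\,y\,] + [\,\partial F/\partial x,\,x\,] = 0$ (Lemma~\ref{le1}) when manipulating the relations — for quadratic relations this syzygy lives in degree $3$ and pins down exactly which quadratic pairs $(G,H)$ can arise from a potential, which is what makes the classification finite and clean.
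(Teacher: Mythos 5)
Your plan is essentially the paper's own proof: reduce by a linear change of variables (justified because the span of the two derivatives is preserved), classify the abelianized binary cubic over the algebraically closed field into the three normal forms $x^3$, $x^2y\rcirclearrowleft$, $(x^2y+xy^2)\rcirclearrowleft$, and then check in each case that the quadratic relations form a Gr\"obner basis and read off infinitely many normal words (or the Hilbert series). The completeness worry you raise is resolved exactly as you suggest, by factoring $F^{\rm ab}$ into three linear forms, and in each normal form the single overlap resolves, so the rewriting system closes immediately.
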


\begin{proof}
 Let us note first the following easy fact, which will be further used.
 If we have a potential $F(x,y)$ on variables $x,y$, and a linear change of variables
 $u=ax+by$, $v=cx+dy$ let the potential in new variables $u,v$ is $G(u,v)=F(x,y)$, then $\partial_u G  = a (\partial_x F)_{u,v} + b (\partial_y F)_{u,v}$ and
 $\partial_v G  = c (\partial_x F)_{u,v} + d (\partial_y F)_{u,v}.$
 Thus the linear space spanned on derivatives is preserved under the change of variables.

 Since $F$ is a homogeneous cyclicly invariant polynomial of degree $3$, we have
$$
F=ax^3+b(x^2y+xyx+yx^2)+c(xy^2+yxy+y^2x)+dy^3.
$$

Consider the abelianization $F^{\rm ab}\in\K[x,y]$ of $F$, obtained from $F$ by assuming that $x$ and $y$ commute:
$$
F^{\rm ab}=ax^3+3bx^2y+3cxy^2+dy^3.
$$

As $\K$ is algebraically closed, we can write $F^{\rm ab}$ as a product of three linear forms:
$$
F^{\rm ab}=(\alpha_1x+\beta_1y)(\alpha_2x+\beta_2y)(\alpha_3x+\beta_3y).
$$
If the three forms above are proportional, a linear substitution turns $F^{\rm ab}$ into $x^3$. The same
substitution turns $F$ into $x^3$ as well. If two of the three forms are proportional, while the third is
not proportional to the first two, then a linear substitution turns $F^{\rm ab}$ into $3x^2y$. The same
substitution turns $F$ into $x^2y+xyx+yx^2$. Finally, if no two of the above three linear forms are
proportional, then a linear substitution turns $F^{\rm ab}$ into $3x^2y+3xy^2$. The same substitution
turns $F$ into $x^2y+xyx+yx^2+xy^2+yxy+y^2x$.

That is a linear substitution turns $F$ into either $x^3$ or $x^2y+xyx+yx^2$ or $x^2y+xyx+yx^2+xy^2+yxy+y^2x$.
Thus we can assume that
$F\in\{x^3,x^2y+xyx+yx^2,x^2y+xyx+yx^2+xy^2+yxy+y^2x\}$.

If $F=x^3$, then $A=\K\langle x,y\rangle/{\rm Id}(x^2)$. If $F=x^2y+xyx+yx^2$, then
$A=\K\langle x,y\rangle/{\rm Id}(xy+yx,x^2)$. Finally, if $F=x^2y+xyx+yx^2+xy^2+yxy+y^2x$,
then $A=\K\langle x,y\rangle/{\rm Id}(xy+yx+y^2,x^2+xy+yx)=\K\langle x,y\rangle/{\rm Id}(xy+yx+y^2,x^2-y^2)$.
In each case the given quadratic defining relations form a Gr\"obner basis in the ideal
of relations (with respect to the usual degree lexicographical ordering; we assume $x>y$).
In each case, the algebra is infinite dimensional. It has exponential growth for $F=x^3$ and it has
the Hilbert series $H_A=1+2t+2t^2+2t^3+\dots$ in the other two cases (the normal words are $y^n$ and $y^nx$).
\end{proof}

\begin{theorem}\label{deg40} If $F\in\K\langle x,y\rangle$ is a homogeneous potential of degree $n\geq 4$,
then the potential algebra
$A_F$
is infinite dimensional.
Moreover, the minimal Hilbert series in the class ${\cal P}_n$ of potential algebras with
homogeneous potential of degree $n+1\geq 4$ is $H_n=\frac{1}{1-2t+2t^n-t^{n+1}}$.
\end{theorem}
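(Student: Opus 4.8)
The plan is to establish the result in two parts: first infinite-dimensionality, then the sharp lower bound on the Hilbert series, which of course implies the first part once we know $H_n = \frac{1}{1-2t+2t^n-t^{n+1}}$ has all non-negative coefficients. To get the lower bound I would construct, for an arbitrary potential algebra $A = A_F$ with $F$ homogeneous of degree $n+1$, a complex of free $A$-modules analogous to the Koszul complex but with one map of degree $n-2$. Concretely, since $F$ is cyclicly invariant one has the syzygy $[\partial_x F, x] + [\partial_y F, y] = 0$ (this is Lemma~\ref{le1} referenced earlier in the excerpt, used already in the proof of Theorem~\ref{deg50}), which is the degree-$(n+1)$ relation among the two defining relations $G = \partial_x F$ and $H = \partial_y F$. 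So the natural candidate is a complex
$$
0 \to A \xrightarrow{\ d_3\ } A^2 \xrightarrow{\ d_2\ } A^2 \xrightarrow{\ d_1\ } A \to \K \to 0,
$$
where $d_1$ is multiplication by the column $(x,y)^{T}$ (so $d_1$ has degree $1$), $d_2$ is given by a $2\times 2$ matrix over $\K\langle x,y\rangle$ whose rows encode $G$ and $H$ written as left combinations $G = G_x x + G_y y$, $H = H_x x + H_y y$ (degree of $d_2$ is $n-1$, the degree of $G,H$), and $d_3$ is the map $A \to A^2$ coming from the syzygy above (degree $n-2$, since the syzygy has degree $n+1$ and $A^2$ sits in the internal degree bookkeeping appropriately). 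The first task is to check $d_1 d_2 = 0$ and $d_2 d_3 = 0$ as maps of $A$-modules — the first is exactly the statement $G, H \in I$ written in the chosen form, the second is exactly the syzygy identity — and then to verify exactness at the two middle terms.

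The key point, and I expect the main obstacle, is proving exactness of this complex at $A^2$ in the middle — i.e. that the syzygy $[G,x]+[H,y]=0$ generates \emph{all} syzygies of $(G,H)$ over $A$ (equivalently, that $\ker d_2 = \operatorname{im} d_3$). Over a free algebra the analogous statement is a statement about the bimodule of relations; here I would instead argue at the level of Hilbert series: if the complex is exact then the Euler characteristic gives
$$
(1-t)H_{\K} \;=\; H_A \bigl(1 - 2t + 2t^{n-1}\cdot t \cdot(\text{correction}) \dots\bigr),
$$
so I need to pin the internal gradings so that the alternating sum of $H_{A}\,t^{\deg d_i}$ telescopes to $1/H_A = 1 - 2t + 2t^n - t^{n+1}$; reading off, the terms must be $1$ (from $\K$), $-2t$ (from $A^2$ at degree $1$), $+t^{?}$ and $+t^{?}$ from the two copies in the next $A^2$, and $-t^{n+1}$ from the final $A$. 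For this to match one needs the two middle generators to sit in degrees $n-1$ and... — actually the clean way is: $A^2$ in homological degree $2$ is $A(-(n-1)) \oplus A(-(n-1))$ wait, that gives $2t^{n-1}$ not the $2t^n$ we want, so one of the shifts must be adjusted, reflecting that one relation effectively contributes at a shifted degree; getting these shifts exactly right is the delicate bookkeeping. Assuming the complex is exact and the gradings are as claimed, one obtains $H_A^{-1} \leq 1 - 2t + 2t^n - t^{n+1}$ coefficientwise in the appropriate sense, hence $H_A \geq \frac{1}{1-2t+2t^n-t^{n+1}} =: H_n$.

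For the reverse inequality — that $H_n$ is actually \emph{achieved}, so it is the minimum — I would exhibit an explicit potential whose algebra has Hilbert series exactly $H_n$. The natural candidate is $F = (x^{n-1}y^2)\rcirclearrowleft$ or, in the degree-$4$ case, the potential $x^2y^2\rcirclearrowleft$ already singled out in the excerpt as $A_{(3)}$; one computes a Gröbner basis of $I_F$ (with respect to degree-lexicographic order, $x>y$), checks it is finite or has a regular enough structure that the normal words can be counted, and verifies the generating function of normal words equals $H_n$. Finally, non-negativity of the coefficients of $H_n$ (needed for the "infinite dimensional" conclusion and to make the coefficientwise inequality meaningful) follows by noting $1 - 2t + 2t^n - t^{n+1} = (1-t)(1 - t - t - \dots)$; more usefully, writing $\frac{1}{1-2t+2t^n-t^{n+1}}$ and showing the recurrence $a_k = 2a_{k-1} - 2a_{k-n} + a_{k-n-1}$ with $a_0=1, a_1=2, \dots, a_{n-1}=2^{n-1}$ stays positive — for $k < n$ the series is just $\frac{1}{1-2t}$ truncated, so $a_k = 2^k$, and for $k \geq n$ the correction terms $-2a_{k-n} + a_{k-n-1}$ are dominated by the exponential growth of $2a_{k-1}$, giving positivity and, incidentally, the corollary on exponential growth for $n \geq 5$ versus polynomial growth for $n=4$ (where the factorization $\frac{1}{(1+t)(1-t)^3}$ from the excerpt shows at most cubic, at least quadratic, growth). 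The main work, to repeat, is the exactness of the generalized Koszul complex at the middle term; everything else is Hilbert-series bookkeeping and one explicit Gröbner basis computation.
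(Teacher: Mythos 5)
There is a genuine gap, and it sits exactly where you placed your main effort. You propose to prove, for an \emph{arbitrary} homogeneous potential of degree $n+1$, exactness of the complex at the left-hand $A^2$ (that is, $\ker d_2=\operatorname{im}d_3$), and then read off $H_A\geq H_n$. Both halves of this plan fail. First, exactness there is simply false in general: take $F=x^{n+1}$, so $\partial_xF=x^n$, $\partial_yF=0$ and $A=\K\langle x,y\rangle/(x^n)$; then $d_2(u,v)=(x^{n-1}u,0)$, so $(0,1)\in\ker d_2$, but $(0,1)\notin\operatorname{im}d_3=\{(xu,yu)\}$ for degree reasons. Second, even where the complex \emph{is} exact, the Euler characteristic of the graded slices gives the exact identity $H_A\cdot(1-2t+2t^n-t^{n+1})=1$, i.e.\ equality $H_A=H_n$, not an inequality; so "exactness for all $A$" would force every algebra in ${\cal P}_n$ to have the same Hilbert series, which the example above already contradicts. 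The weaker statement that does hold for every $A$ (exactness at the three rightmost terms, Lemma~\ref{le2}) only yields the slice estimate $\dim A_{k+n+1}\geq \dim\operatorname{im}(d_3|_{A_k})-2\dim A_{k+1}+2\dim A_{k+n}$, and this cannot be propagated by a naive coefficientwise induction because of the negative term $-2\dim A_{k+1}$ and because $d_3$ need not be injective.

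What is missing is the paper's key idea for converting information about one good algebra into minimality over the whole class: a semicontinuity argument. The paper proves full exactness of the complex and $H_B=H_n$ only for the single algebra $B$ given by $x^{n-1}y^2\rcirclearrowleft$ (your Example step, which is correct and matches Example~\ref{ex1}), and then, in Proposition~\ref{minser}, runs an induction on degree in which one restricts to the Zariski-open set of $A\in{\cal P}_n$ whose graded components up to degree $k+n$ have minimal dimension, and uses Lemma~\ref{maxra} (maximality of the rank of $d_3$ and then of $d_2$ on graded slices is a Zariski-open condition) together with exactness for $B$ to pin down $\dim A_{k+n+1}=2a_{k+n}-2a_{k+1}+a_k$ for generic, hence minimal, $A$; this equals $\dim B_{k+n+1}$. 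Your proposal contains the right complex, the right extremal example and the positivity bookkeeping, but without the genericity/rank-semicontinuity step the lower bound $H_A\geq H_n$ for all $A\in{\cal P}_n$ is not established, and the route you suggest for it (exactness for all $A$) cannot be repaired.
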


  Note, that after the proof of this theorem is completed, so we know that the minimal Hilbert series in the class ${\cal P}_n$ of potential algebras with homogeneous
potential of degree $n+1\geq 4$ is $H_n=\frac{1}{1-2t+2t^n-t^{n+1}}$, it immediately follows that algebras from  ${\cal P}_n$ are all infinite dimensional.

\begin{corollary} Growth of a potential algebra with homogeneous potential of degree $4$ can
be polynomial, but starting from degree $5$ it is always exponential.
\end{corollary}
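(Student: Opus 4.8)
The plan is to read off both assertions of the corollary from the rational Hilbert series supplied by Theorem~\ref{deg40}, treating the degree-$4$ case and the degree-$\geq 5$ case separately.

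For the degree-$4$ claim it is enough to exhibit one homogeneous potential of degree $4$ whose algebra has polynomial growth. The natural choice is the algebra $A_{(3)}$ associated with the potential $x^2y^2\rcirclearrowleft$ from Example~\ref{ex1}; by Theorem~\ref{deg40} its Hilbert series is the minimal one in ${\cal P}_3$, namely $H_3=\frac{1}{1-2t+2t^3-t^4}$. First I would factor the denominator as $1-2t+2t^3-t^4=(1-t)(1-t-t^2+t^3)=(1+t)(1-t)^3$, so that $H_3=\frac{1}{(1+t)(1-t)^3}$. A partial-fraction expansion then shows that the dominant singularity is the pole of order $3$ at $t=1$, with coefficient $\tfrac12$ in front of $(1-t)^{-3}$; hence the $k$-th coefficient equals $\tfrac12\binom{k+2}{2}+O(k)=\tfrac{k^2}{4}+O(k)$, so $A_{(3)}$ has polynomial growth of degree exactly $2$. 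In particular a degree-$4$ homogeneous potential can give an algebra of polynomial (and genuinely superlinear) growth, which is the first half of the corollary; the same is visible from the linear recurrence satisfied by the coefficients of $H_3$.

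For potentials of degree $n+1\geq 5$, i.e.\ $n\geq 4$, I would show that the minimal Hilbert series $H_n=\frac1{p(t)}$ with $p(t)=1-2t+2t^n-t^{n+1}$ already grows exponentially. The key computation is the factorization
\[
p(t)=(1-t)\,q(t),\qquad q(t)=1-(t+t^2+\dots+t^{n-1})+t^n,
\]
which is verified by a one-line multiplication. Since $q(0)=1>0$ while $q(1)=1-(n-1)+1=3-n\leq-1<0$ for $n\geq 4$, the polynomial $q$ — hence $p$ — has a real root $\rho\in(0,1)$. Therefore the power series $H_n$ has radius of convergence at most $\rho<1$, so its coefficients $a_k$ (nonnegative integers, since $H_n$ is attained as a Hilbert series) satisfy $\limsup_k a_k^{1/k}\geq1/\rho>1$, i.e.\ they grow exponentially. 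By Theorem~\ref{deg40} every $A\in{\cal P}_n$ satisfies $H_A\geq H_n$ coefficientwise, so the partial sums of the coefficients of $H_A$ grow exponentially, that is $A$ has exponential growth; being two-generated, $A$ also has at most exponential growth, hence exactly exponential growth.

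There is no serious obstacle once Theorem~\ref{deg40} is available: the only genuine steps are exhibiting the explicit degree-$4$ example (done in Example~\ref{ex1}) and the root-location argument for degree $\geq 5$. The latter is the place to be careful, because the behaviour changes precisely at $n=3$: there $q(1)=0$ and in fact $q(t)=(1-t)^2(1+t)$, so $p(t)=(1-t)^3(1+t)$ has no zero inside the open unit disc — which is exactly why only polynomial growth is forced in the degree-$4$ case.
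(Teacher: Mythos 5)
Your proof is correct and follows essentially the same route as the paper: both assertions are read off the minimal Hilbert series $H_n$ of Theorem~\ref{deg40} and the algebra of Example~\ref{ex1}. For degree $4$ the paper's own justification (stated in the Introduction) is the identical factorization $\frac{1}{1-2t+2t^3-t^4}=\frac{1}{(1+t)(1-t)^3}$ together with the recurrence ruling out linear growth, which matches your partial-fraction computation; the only (harmless) discrepancy is terminological, since the paper counts growth cumulatively, so coefficients $\sim k^2/4$ give growth of degree $3$ rather than your ``degree exactly $2$''. For degree $\geq 5$ the paper asserts exponential growth without a written argument; your factorization $1-2t+2t^n-t^{n+1}=(1-t)\bigl(1-(t+\dots+t^{n-1})+t^n\bigr)$ and the resulting root in $(0,1)$ for $n\geq 4$ is exactly equivalent to the criterion the paper invokes in the proof of Theorem~\ref{deg50} (negativity of the Golod--Shafarevich polynomial at some $t_0\in(0,1)$, via Ershov's survey), so your argument correctly fills in the step the paper leaves implicit, and the coefficientwise minimality of $H_n$ transfers the conclusion to every algebra in ${\cal P}_n$ just as you say.
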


One of the key instruments in our proof is a potential complex of right $A$-modules, which has already appeared in the classical papers of Artin-Schelter \cite{AS}, section one, and in Artin, Tate, Van den Bergh \cite{ATV}  under the name 'potential complex' in section 2.

\begin{definition}

The potential complex is a complex of right $A$-modules

$$
0\to A\mathop{\longrightarrow}^{d_3}A^2\mathop{\longrightarrow}^{d_2}A^2
\mathop{\longrightarrow}^{d_1}A\mathop{\longrightarrow}^{d_0}\K\to 0,
$$

where $d_0$ is the augmentation map,

$$
d_1(u,v)=xu+yv,\quad d_2(u,v)=\left(\begin{array}{cc}\partial_x\partial_xF&\partial_x\partial_yF\\
\partial_y\partial_xF&\partial_y\partial_yF\end{array}\right)\left(\begin{array}{c}u\\ v\end{array}\right),\quad
d_3(u)=(xu,yu).
$$
\end{definition}

\begin{proof} We will start the proof of the main Theorem~\ref{deg40}
with a number of general observations.

\begin{lemma}\label{le1}
For every $F\in\K\langle x,y\rangle$ such that $F_0=0$, $F=x\frac{\partial F}{\partial x}+y\frac{\partial F}{\partial y}$. Furthermore, the equality $F=\frac{\partial F}{\partial x}x+\frac{\partial F}{\partial y}y$ holds if and only if $F$ is cyclicly invariant. In particular, $[x,\frac{\partial F}{\partial x}]+[y,\frac{\partial F}{\partial y}]=0$ if and only if $F$ is cyclicly invariant.
\end{lemma}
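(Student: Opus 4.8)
The whole lemma is a bookkeeping exercise about what the two Kontsevich derivatives $\partial_x$ and $\partial_y$ do on monomials, so the plan is to reduce everything to monomials by linearity and then chase the definitions. First I would verify the identity $F = x\,\partial_x F + y\,\partial_y F$ for $F$ with $F_0 = 0$. By linearity it suffices to check it on a monomial $w$ of length $\geq 1$. Such a $w$ starts either with $x$ or with $y$: if $w = xu$ then $\partial_x w = u$ and $\partial_y w = 0$, so $x\,\partial_x w + y\,\partial_y w = xu = w$; symmetrically if $w = yu$. (The hypothesis $F_0 = 0$ is exactly what guarantees every monomial occurring in $F$ has positive length, so this case split is exhaustive.) Hence the identity holds on a spanning set, and therefore on all of $F$.

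Next I would analyze the ``right-hand'' version $F = \partial_x F \cdot x + \partial_y F \cdot y$. The natural move is to compute $\partial_x F \cdot x + \partial_y F \cdot y$ on a monomial $w$ and compare with $w$. For a single monomial $w = s_1 s_2 \cdots s_k$ (each $s_i \in \{x,y\}$), one has $\partial_{s_1} w = s_2\cdots s_k$ and $\partial_{s}w = 0$ for the other letter $s$, so $\partial_x w\cdot x + \partial_y w \cdot y = (s_2 \cdots s_k) s_1$, which is the \emph{cyclic shift} of $w$ by one position, not $w$ itself. Write $\sigma$ for this one-step cyclic shift operator on monomials (extended linearly). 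So for any $F$ with $F_0=0$ we get the clean formula $\partial_x F\cdot x + \partial_y F \cdot y = \sigma(F)$. Therefore $F = \partial_x F\cdot x + \partial_y F\cdot y$ holds if and only if $\sigma(F) = F$. It remains to observe that $\sigma(F) = F$ is equivalent to $F$ being cyclically invariant: $\sigma$ permutes the monomials of each length within their cyclic-rotation orbits, so a polynomial is fixed by $\sigma$ precisely when, within each orbit, all monomials appear with the same coefficient — which is the definition of cyclic invariance (being a $\K$-linear combination of terms of the form $u\rcirclearrowleft$). Strictly speaking $\sigma(F)=F$ gives invariance under the single generator of the cyclic group acting on each orbit, hence under the whole group; I would spell this out in one line per length-graded component.

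Finally, the ``in particular'' clause is immediate: subtract the two identities. From $F = x\,\partial_x F + y\,\partial_y F$ (always true when $F_0 = 0$) and $F = \partial_x F\cdot x + \partial_y F\cdot y$ (true iff $F$ cyclically invariant), we get
$$
x\,\partial_x F + y\,\partial_y F - \partial_x F\cdot x - \partial_y F\cdot y = [x,\partial_x F] + [y,\partial_y F],
$$
so $[x,\partial_x F] + [y,\partial_y F] = 0$ exactly when the second identity holds, i.e.\ exactly when $F$ is cyclically invariant. I do not anticipate a genuine obstacle here; the only point requiring care is keeping the ``left'' derivative $\partial_x w = u$ when $w = xu$ straight from the ``right'' side, and making sure the case analysis on the first letter of a monomial is genuinely exhaustive — which is precisely where $F_0 = 0$ is used. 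Everything else is a direct unwinding of definitions, performed componentwise in the length grading on $\K\langle x,y\rangle$.
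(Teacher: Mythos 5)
Your proof is correct and complete: the monomial-by-monomial check of $F=x\,\partial_xF+y\,\partial_yF$, the observation that $\partial_xF\cdot x+\partial_yF\cdot y$ is the one-step cyclic shift of $F$ (so the right-hand identity holds exactly for cyclically invariant $F$), and the subtraction yielding the commutator statement are precisely the routine verification the paper has in mind when it dismisses the lemma as ``Trivial'' without giving any argument. No discrepancy with the paper's (nonexistent) proof; your write-up simply supplies the details.
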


\begin{proof}
Trivial.
\end{proof}

\begin{lemma}\label{le2}
Let $F\in\K\langle x,y\rangle$ be cyclicly invariant such that $F_0=F_1=0$, and $A=\langle x,y\rangle/I$ with
$I={\rm Id}(\partial_xF,\partial_yF)$ be the corresponding potential algebra. 
Then the potential complex of right A-modules
is exact at the three rightmost terms.

\end{lemma}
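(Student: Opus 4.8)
The claim is that
$$
A^2\mathop{\longrightarrow}^{d_2}A^2\mathop{\longrightarrow}^{d_1}A\mathop{\longrightarrow}^{d_0}\K\to 0
$$
is exact, i.e.\ exactness at $\K$, at the rightmost $A$, and at the middle $A^2$. The plan is to treat the three spots separately, in order of increasing difficulty.

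Exactness at $\K$ is immediate: $d_0$ is the augmentation map and is surjective by definition. Exactness at $A$ (the spot $A^2\to A\to\K$) is the standard fact that the augmentation ideal of $A$ is generated, as a right $A$-module, by the images of the generators $x,y$: the image of $d_1$ is exactly $xA+yA$, which equals $\ker d_0$ since every word of positive length starts with $x$ or $y$. One should also record here the easy half of the composability, $d_0 d_1 = 0$, which is clear. The only content worth spelling out is that $d_1 d_2 = 0$ and $d_2 d_3 = 0$; the first follows from Lemma~\ref{le1} applied to the two relations $\partial_x F,\partial_y F$ in place of $F$ (the syzygy $[x,\partial_xF]+[y,\partial_yF]=0$ holds by cyclic invariance of $F$, which is exactly what makes the first column of $d_2$ a left-annihilator of $(x,y)$ after passing to $A$), and $d_2 d_3=0$ is the same identity read on the other side.

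The real work is exactness at the middle term $A^2$, i.e.\ $\ker d_1 = \operatorname{im} d_2$. The inclusion $\supseteq$ is the computation $d_1 d_2 = 0$ just discussed. For $\subseteq$, suppose $(u,v)\in A^2$ with $xu+yv=0$ in $A$. Lift $u,v$ to $\tilde u,\tilde v\in\K\langle x,y\rangle$; then $x\tilde u + y\tilde v \in I = \operatorname{Id}(\partial_xF,\partial_yF)$, so $x\tilde u + y\tilde v = \sum_i a_i(\partial_x F) b_i + \sum_j c_j(\partial_y F) d_j$ for some $a_i,b_i,c_j,d_j\in\K\langle x,y\rangle$. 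The idea is to rewrite the right-hand side so as to factor out a leading $x$ from one piece and a leading $y$ from the other. Concretely, each term $a_i(\partial_xF)b_i$ with $a_i$ of positive length already starts with $x$ or $y$; using the identities $x\partial_xF = \partial_x(xF)$-type bookkeeping together with Lemma~\ref{le1}'s relation $F = x\partial_xF + y\partial_yF$ (and its cyclic counterpart), one reorganizes the entire ideal element into the form $x p + y q$ where $p\in \partial_xF\cdot\K\langle x,y\rangle + (\text{stuff in }I)$ expressed through $d_2$, and symmetrically for $q$. Matching coefficients of $x$ and $y$ on both sides — legitimate because $\K\langle x,y\rangle = \K \oplus x\K\langle x,y\rangle \oplus y\K\langle x,y\rangle$ as a vector space — yields that $(u,v)$ modulo $I$ lies in the image of the matrix $d_2$. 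The cleanest way to package this is to observe that $d_2$ is precisely the matrix of the relation module: its columns are the left-hand sides of the two defining relations written as right-linear combinations of the free generators via $\partial_x,\partial_y$, so ``$(u,v)$ kills $x,y$ on the left in $A$'' forces $(u,v)$ into the span of the rows obtained from the relations, which is $\operatorname{im} d_2$.

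The main obstacle is exactly this last factoring/matching step: one must make precise the passage from an arbitrary two-sided ideal element $\sum a_i r_i b_i$ to the ``first-column'' normal form in which a single $x$ (resp.\ $y$) can be pulled to the left, and this is where cyclic invariance of $F$ is used in an essential way (through Lemma~\ref{le1}) — without it the syzygy $[x,\partial_xF]+[y,\partial_yF]=0$ fails and the complex is not even a complex. I expect the argument to mirror the classical Artin--Schelter/ATV computation for the potential complex, so I would cite \cite{AS,ATV} for the general mechanism and only carry out the two-generator bookkeeping in detail. Note that exactness at the \emph{leftmost} $A$ (injectivity of $d_3$) is deliberately \emph{not} claimed here — that is the deeper statement, tied to the Hilbert series computation, and is handled later; here we need only the three rightmost terms.
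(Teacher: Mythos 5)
Your plan is correct and follows the paper's own proof essentially step for step: the compositions vanish via the identities of Lemma~\ref{le1}, exactness at $\K$ and at the rightmost $A$ is immediate, and exactness at the middle $A^2$ is obtained exactly as in the paper by lifting to $\K\langle x,y\rangle$, writing the ideal element via $I=\partial_xF\,\K\langle x,y\rangle+\partial_yF\,\K\langle x,y\rangle+xI+yI$, rewriting $\partial_xF,\partial_yF$ through the identity $G=x\,\partial_xG+y\,\partial_yG$, and matching components in $\K\oplus x\K\langle x,y\rangle\oplus y\K\langle x,y\rangle$ before reducing modulo $I$. One minor slip that does not affect the argument: $d_1d_2=0$ uses only that identity applied to $\partial_xF,\partial_yF$ plus the fact that these are relations of $A$, whereas the cyclic-invariance identity $F=\partial_xF\,x+\partial_yF\,y$ (equivalently the syzygy $[x,\partial_xF]+[y,\partial_yF]=0$) is what is actually needed for $d_2d_3=0$, not for $d_1d_2=0$ as your parenthetical suggests.
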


\begin{proof}(of Lemma~\ref{le2})

First, we show that $d^2=0$. Obviously, $d_0\circ d_1=0$. Note that this kind of
complex ending is rather common. It is shared, for instance, by the Koszul complex of a quadratic algebra.
Next, we show that $d_1\circ d_2=0$. Indeed,
\begin{align*}
d_1(d_2(a,b))&=d_1(\partial_x\partial_xFa+\partial_x\partial_yFb,\partial_y\partial_xFa+\partial_y\partial_yFb)
\\
&=x(\partial_x\partial_xFa+\partial_x\partial_yFb)+y(\partial_y\partial_xFa+\partial_y\partial_yFb)
\\
&=(x\partial_x\partial_xF+y\partial_y\partial_xF)a+(x\partial_x\partial_yF+y\partial_y\partial_yF)b
\\
&=(\partial_xF)a+(\partial_yF)b=0,
\end{align*}

where the second last equality is due to Lemma~\ref{le1}, while the last equality follows from the definition of $A$.

Now we show that $d_2\circ d_3=0$. Indeed,
$$
d_2(d_3(u))=d_2(xu,yu)=(\partial_x(\partial_xFx+\partial_yFy)u,\partial_y(\partial_xFx+\partial_yFy)u)
=((\partial_xF)u,(\partial_yF)u)=(0,0),
$$
where the second last equality is due to Lemma~\ref{le1} and cyclic invariance of $F$.

Now the exactness of the complex in question at $\K$ and at the rightmost $A$ are obvious. It remains to check its exactness at the rightmost $A^2$. That is, we have to verify that if $d_1(u,v)=0$, then $(u,v)=d_2(a,b)$ for some $a,b\in A$.

Let $u,v\in A$ be such that $d_1(u,v)=0$. Pick $u_1,u_2\in\K\langle x,y\rangle$ such that $u_1+I=u$ and $v_1+I=v$. Since $xu+yv=0$ in $A$, we have  $xu_1+yv_1\in I$. Since $I=xI+yI+\partial_xF\K\langle x,y\rangle+\partial_yF\K\langle x,y\rangle$, we see that $xu_1+yv_1=\partial_xF a_1+\partial_yF b_1+xp+yq$, where $a_1,b_1\in \K\langle x,y\rangle$ and $p,q\in I$. Using Lemma~\ref{le1}, we have $\partial_xF=x\partial_x\partial_xF+y\partial_y\partial_xF$ and $\partial_yF=x\partial_x\partial_yF+y\partial_y\partial_yF$. Plugging these into the previous equality, we get
$$
xu_1+yv_1=(x\partial_x\partial_xF+y\partial_y\partial_xF)a_1+(x\partial_x\partial_yF+y\partial_y\partial_yF)b_1+xp+yq.
$$

Rearranging the terms, we arrive to
$$
x(u_1-p-\partial_x\partial_xFa_1-\partial_x\partial_yFb_1)+y(v_1-q-\partial_y\partial_xFa_1-\partial_y\partial_yFb_1)=0,
$$
where the equality holds in $\K\langle x,y\rangle$. This can only happen if both summands in the above display are zero:
$$
u_1-p-\partial_x\partial_xFa_1-\partial_x\partial_yFb_1=v_1-q-\partial_y\partial_xFa_1-\partial_y\partial_yFb_1=0.
$$
Factoring out $I$ and using that $p,q\in I$, $u_1+I=u$ and $v_1+I=v$, we get
$$
u=\partial_x\partial_xFa+\partial_x\partial_yFb,\quad v=\partial_y\partial_xFa+\partial_y\partial_yFb
$$
in $A$, where $a=a_1+I$ and $b=b_1+I$. That is, $(u,v)=d_2(a,b)$, as required. By this the proof of the lemma is complete.
\end{proof}

The next step in the proof of Theorem~\ref{deg40} will be to construct an example for which the above complex is exact.

In the Example below we calculate Hilbert series directly, using word combinatorics and Gr\"obner bases techniques.
Note that the same example appeared in \cite{Bo}, lemma 5.2(2)
 as an example of a potential algebra with rose quiver for which a complex of bimodules responsible for the Calabi-Yau property is exact.


\begin{example}\label{ex1} For $n\geq 3$, consider the homogeneous degree $n+1$ potential
$$
F= {x^{n-1}y^2} \rcirclearrowleft.
$$


Denote the corresponding potential algebra $B$, then the Hilbert series of $B$ is given by
$H_B(t)=\frac1{1-2t+2t^n-t^{n+1}}$ and the complex of Lemma~\ref{le2} for $B$ is exact.
\end{example}

\begin{proof} The defining relations $\partial_xF=x^{n-2}y^2+x^{n-3}y^2x+{\dots}+y^2x^{n-2}$ and
$\partial_yF=x^{n-1}y+yx^{n-1}$ of $B$ form a reduced Gr\"obner basis in the ideal of
relations of $B$ with respect to the left-to-right degree lexicographical ordering assuming $x>y$.
Indeed the leading monomials $x^{n-2}y^2$ and $x^{n-1}y$ of the defining relations have one overlap
only: $x^{n-1}y^2=x(x^{n-2}y^2)=(x^{n-1}y)y$, which resolves. Knowing the Gr\"obner basis,
we can obtain certain information on normal words - those words which does not contain as a subwords leading monomials of the Gr\"obner bases. In our case leading monomials of the Gr\"obner bases are $x^{n-2}y^2$ and $x^{n-1}y, n \geq 3$, and it is convenient to find a recurrence relation on the number of normal words $a_n$ of length $n$, which will give us a presentation of the Hilbert series as a rational function.
This could be obtained from the following combinatorial  formulas.
Denote by $\cal N$ the set of normal words, and let $ B_m$ be the set of normal words of degree $n$ ending with $x$ and by $C_m$ the set of normal words of degree $m$, ending with $y$. Obviously, we have a disjoint union $N_m=B_m \cup C_m$. Then, it is easy to see that

$$B_{m+1}=B_m x \cup C_m x$$

and

$$C_{m+1}=(B_m \setminus (B_{m-n+1} \cup C_{m-n+1}) x^{n-1})y \cup
(C_m \setminus  (C_{m-n+1} x^{n-2}y))y.  $$

Denote by $a_m$ the number of normal words of length $n$, and let $\vert B_m \vert=b_m, \,
\vert C_m \vert=c_m$, then from the above we have:

$$b_{m+1}=b_m+c_m$$

and

$$c_{m+1}=b_m+c_m-2c_{m-n+1}-b_{m-n+1}$$

From this we get a recurrence relation

$$a_{m+1}=2a_m-2a_{m-n+1}+a_{m-n},$$

which together with several few terms of the Hilbert series:
 $a_0=1, a_1=2, ...,a_{n-2}=2^{n-2}, a_{n-1}=2^{n-1}, a_n=2^n-2$,
 will give the required rational expression for the Hilbert series:
$H_A(t)=\frac1{1-2t+2t^n-t^{n+1}}$.

It remains to show that the complex
$$
0\to B\mathop{\longrightarrow}^{d_3}  B^2 \mathop{\longrightarrow}^{d_2}
B^2\mathop{\longrightarrow}^{d_1} B \mathop{\longrightarrow}^{d_0}\K\to 0
$$
is exact, where $d_0$ is the augmentation map,
$$
d_1(u,v)=xu+yv,\quad d_2(u,v)=\left(\begin{array}{cc}\partial_x\partial_xF&\partial_x\partial_yF\\
\partial_y\partial_xF&\partial_y\partial_yF\end{array}\right)\left(\begin{array}{c}u\\ v\end{array}\right),\quad
d_3(u)=(xu,yu).
$$

Now observe that this complex is exact at the leftmost $B$. Indeed, this exactness is
equivalent to the injectivity of $d_3$. Since none of the two leading monomials of the
elements of the Gr\"obner basis starts with $y$, the set of normal words is closed under multiplication by $y$ on the left. Hence the map $u\mapsto yu$ from $B$ to itself is injective and
therefore $d_3$ is injective. By Lemma~\ref{le2}, the complex is exact at its three rightmost terms.
Thus it remains to verify exactness at the leftmost $B^2$.

Set $b_k=\dim B_k$. Consider the $k^{\rm th}$ slice of the complex:
$$
0\to B_k\to B_{k+1}^2\to B_{k+n}^2\to B_{k+n+1}\to 0.
$$
By exactness at $\K$ and the rightmost $B$, we have $d_1(B_{k+n}^2)=B_{k+n+1}$.
Hence $\dim \ker d_1\cap B_{k+n}^2=2b_{k+n}-b_{k+n+1}$. By exactness at the rightmost
$B^2$, $d_2$ maps $B_{k+1}^2$ onto $\ker d_1\cap B_{k+n}^2$. Hence
$\dim \ker d_2\cap B_{k+1}^2=2b_{k+1}-2b_{k+n}+b_{k+n+1}$. Finally, $d_3$ is
injective and therefore $\dim d_3(B_k)=b_k$. Thus the exactness of the slice is
equivalent to the equality $b_k=2b_{k+1}-2b_{k+n}+b_{k+n+1}$. On the other hand, we know that
$b_k$ are the Taylor coefficients of the rational function $\frac1{1-2t+2t^n-t^{n+1}}$, which are easily
seen to satisfy the recurrent relation $b_{k+n+1}=2b_{k+m}-2b_{k+1}+b_{k}$. Hence all the slices of the
complex are exact and therefore the entire complex for $B$ is exact.
\end{proof}

Denote by ${\cal P}_n$ the class of all potential algebras with homogeneous potential of degree
$n+1.$

In the remaining part of the proof, we will show that the Hilbert series of the algebra $B$ with potential of degree $n+1$ is actually minimal in the class ${\cal P}_n$ ($n\geq 3$), which ensure that any algebra in this class is infinite dimensional.

\begin{proposition}\label{minser} For every $n\geq 3$, the Hilbert series of the potential
algebra $B$ given by the potential ${x^{n-1}y^2} {\rcirclearrowleft}$ is minimal in the
class ${\cal P}_n$ of potential algebras with homogeneous potentials of degree $n+1$ on two generators.
\end{proposition}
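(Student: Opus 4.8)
The plan is to run the potential complex argument for an arbitrary algebra in $\mathcal P_n$ and compare Hilbert series degree by degree with that of $B$. Fix $A=A_F$ with $F$ homogeneous of degree $n+1$. By Lemma~\ref{le2} the potential complex, which taking the internal gradings into account reads
$$0\to A(-n-1)\xrightarrow{d_3}A(-n)^2\xrightarrow{d_2}A(-1)^2\xrightarrow{d_1}A\xrightarrow{d_0}\K\to0$$
(so that $\deg d_1=\deg d_3=1$ and $\deg d_2=n-1$), is exact at the three rightmost terms. Write $E=\ker d_2/\operatorname{im}d_3$ and $S=\ker d_3$ for the two possibly nonzero homology modules. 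Computing Euler characteristics in each internal degree and setting $P(t)=1-2t+2t^n-t^{n+1}$ gives
$$H_A(t)\,P(t)=1+H_E(t)-H_S(t).$$
Since the relations $\partial_xF,\partial_yF$ lie in degree $n$, $A$ agrees with $\K\langle x,y\rangle$ in degrees $<n$ and $\dim A_n\ge 2^n-2$; this makes the coefficients of $t^0,\dots,t^n$ on the left equal to $1,0,\dots,0$ and something $\ge0$, consistently with $S$ supported in degrees $\ge n+1$ and $E$ in degrees $\ge n$. If we can show $S=0$, then $H_AP=1+H_E$ has nonnegative coefficients, hence
$$H_A=\tfrac1P+H_E\cdot\tfrac1P=H_n+H_E\cdot H_n,$$
and since $H_n=\tfrac1P$ is the Hilbert series of $B$ (Example~\ref{ex1}) it has nonnegative coefficients, so $H_E\cdot H_n$ does too, and therefore $H_A\ge H_n$ coefficientwise, which is exactly the assertion of the Proposition.

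So the whole matter reduces to proving $\ker d_3=0$, i.e. that a two-generated potential algebra admits no nonzero $u$ with $xu=yu=0$. Here is how I would do it. Take such a $u\ne0$, homogeneous of minimal degree $k$, and lift it to $u\in\K\langle x,y\rangle$, so $xu,yu\in I=\operatorname{Id}(\partial_xF,\partial_yF)$ but $u\notin I$. Using $I=xI+yI+(\partial_xF)\K\langle x,y\rangle+(\partial_yF)\K\langle x,y\rangle$, the identities $\partial_xF=x\,\partial_x\partial_xF+y\,\partial_y\partial_xF$ and $\partial_yF=x\,\partial_x\partial_yF+y\,\partial_y\partial_yF$ from Lemma~\ref{le1}, and the fact that $xW_1+yW_2=0$ in the free algebra forces $W_1=W_2=0$, one extracts that $(\bar u,0)$ and $(0,\bar u)$ lie in $\operatorname{im}d_2$. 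In the minimal degree $k=n-1$ the relevant coefficients are scalars and $I_{n-1}=0$, so these equalities hold already in $\K\langle x,y\rangle$: $xu=\alpha\,\partial_xF+\beta\,\partial_yF$ and $yu=\gamma\,\partial_xF+\delta\,\partial_yF$ with $(\alpha,\beta),(\gamma,\delta)\ne0$. Comparing supports, $\alpha\,\partial_xF+\beta\,\partial_yF$ has all monomials beginning with $x$ and $\gamma\,\partial_xF+\delta\,\partial_yF$ all beginning with $y$; writing $\lambda_{[w]}$ for the common coefficient in $F$ of the cyclic class of a word $w$, this becomes $\alpha\lambda_{[xyw]}+\beta\lambda_{[y^2w]}=0$ and $\gamma\lambda_{[x^2w]}+\delta\lambda_{[yxw]}=0$ for every word $w$ of degree $n-1$, while $u=\sum_w(\alpha\lambda_{[x^2w]}+\beta\lambda_{[yxw]})w$. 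A direct finite case analysis of these linear relations then forces $u=0$, a contradiction; the case $k>n-1$ is reduced to $k=n-1$ by stripping leading $x$'s and $y$'s off $u$.

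The Euler-characteristic bookkeeping, the low-degree check and the closing power-series manipulation are routine; the one genuinely load-bearing step — and the only place where it is essential that the two relations come from a cyclicly invariant potential (rather than being arbitrary degree-$n$ relations, for which the analogous estimate fails, cf. Proposition~\ref{pro2}) — is the vanishing of $S=\ker d_3$. The finite coefficient computation in degree $n-1$ is manageable; the part I expect to be most delicate is organizing the descent from an arbitrary degree $k$ to $k=n-1$ rigorously, i.e. proving cleanly that a two-generated potential algebra has trivial left socle.
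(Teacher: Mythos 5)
Your Euler--characteristic reduction is correct as far as it goes: with exactness at the three rightmost terms (Lemma~\ref{le2}), one does get $H_A(t)P(t)=1+H_E(t)-H_S(t)$ with $P(t)=1-2t+2t^n-t^{n+1}$, and if $S=\ker d_3=0$ then $H_A=(1+H_E)\cdot\frac1P\geq\frac1P=H_B$ coefficientwise, since $\frac1P$ is a Hilbert series by Example~\ref{ex1}. The problem is that everything now rests on the claim that $\ker d_3=0$ for \emph{every} $A\in{\cal P}_n$, and this you have not proved. The degree $k=n-1$ analysis is only asserted (``a direct finite case analysis \dots forces $u=0$''), and, more seriously, the descent from a minimal-degree element $u$ of degree $k>n-1$ to degree $n-1$ ``by stripping leading $x$'s and $y$'s off $u$'' does not work: if $xu=yu=0$ there is no reason whatsoever that any lower-degree ``piece'' of $u$ is again annihilated by both generators, and for $k>n-1$ all that exactness at the rightmost $A^2$ gives you is $(u,0)=d_2(a,b)$ and $(0,u)=d_2(c,d)$ with $a,b,c,d$ of positive degree in $A$ (not scalars), from which no contradiction is extracted. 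So the one load-bearing step of your proof is a genuine gap.

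It is also worth realizing how strong the missing lemma is: if a graded algebra $A\in{\cal P}_n$ were finite dimensional, any nonzero homogeneous element of top degree would satisfy $xu=yu=0$, so ``$\ker d_3=0$ for all $A\in{\cal P}_n$'' already implies the infinite-dimensionality statement of Theorem~\ref{deg40}; it is at least as hard as the theorem you are trying to prove (it asserts the absence of one-dimensional graded left ideals in every potential algebra of the class), and it should not be expected to follow from a short coefficient computation. The paper deliberately avoids this statement: it proves full exactness of the potential complex only for the single algebra $B$ with potential $x^{n-1}y^2\rcirclearrowleft$, by an explicit Gr\"obner basis and recurrence computation, and then propagates minimality of $H_B$ degree by degree to the whole class by a semicontinuity argument --- the ranks of $d_3$ and $d_2$ on each graded slice are maximal on a Zariski open set (Lemma~\ref{maxra}), the exactness for $B$ shows these maximal ranks equal the ones forced by exactness, and an induction on the degree then yields $\dim B_{k+n+1}=\min\{\dim A_{k+n+1}\}$. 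In other words, the paper needs injectivity of $d_3$ only for $B$ (where it is immediate from the Gr\"obner basis, since no leading monomial starts with $y$), not for an arbitrary potential algebra. To repair your argument you would either have to supply a complete proof that two-generated homogeneous potential algebras have no nonzero element killed by both generators on the left --- which, if you can do it, would be a stronger and independently interesting result --- or fall back on a genericity/semicontinuity mechanism of the kind the paper uses.
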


\begin{proof} First, note that for every $A\in{\cal P}_n$, the $k^{\rm th}$ coefficient of the
Hilbert series is $2^k$ for each $k<n$, the same as for the free algebra $T=\K\langle x,y\rangle$.
Since $B\in{\cal P}_n$, the coefficients up to degree $n-1$ of $H_{B}$ are indeed minimal.
Now each $A\in{\cal P_n}$ is given by two relations of degree $n$. Then $\dim A_n$ is $2^{n}-2=\dim T_n-2$
if these relations are linearly independent and is greater otherwise. Since the defining relations of
$B$ are linearly independent, $\dim B_n=2^{n}-2$ and is minimal. Consider now
$\dim B_k$ with $k=n+1$.  For an arbitrary $A\in{\cal P_n}$, the component of degree
$n+1$ of the ideal of relations is the linear span of $8$ elements being the two relations $\partial_xF$
and $\partial_yF$ (here $F$ is the potential for $A$) multiplied by the variables
$x$ and $y$ on the left or on the right. However these 8 elements exhibit at least one non-trivial
linear dependence $[\partial_xF,x]+[\partial_yF,y]=0$. Thus $\dim A_{n+1}\geq 2^{n+1}-7$. We
already know the Hilbert series of $B$, which gives $\dim B_{n+1}=2^{n+1}-7$.
So, the $n+1$-st coefficient  of the Hilbert series of $B$ is again minimal.

We proceed in the following way. Assume $k$ is a non-negative integer such that the
coefficients of $H_B$ are minimal up to degree $k+n$ inclusive. We shall verify
that the degree $k+n+1$ coefficient of $H_B$ is minimal as well, which would complete the inductive proof.
The last paragraph was actually providing us with the basis of induction. Consider the slice of the above complex.
$$
0\to A_k\to A_{k+1}^2\to A_{k+n}^2\to A_{k+n+1}\to 0
$$
for algebras $A\in{\cal P_n}$. Note that the coefficients of $H_B$ are minimal up to degree $k+n$.

This means that $\dim B_j=\dim A_j$ for $j\leq k+n$ for Zariski generic $A\in {\cal P}_n$. Indeed,
it is well-known and easy to show that in a variety of graded algebras the set of algebras minimizing the
dimension of any given graded component is Zariski open. Thus generic members of the variety will have
component-wise minimal Hilbert series.
To proceed with the proof, we need the following lemma (which is formulated here in a slightly larger generality than we will need).


\begin{lemma} \label{maxra}
Let $n,m,N,k_1,\dots,k_m$ be positive integers and for $1\leq j\leq m$, $r_j:\K^N\to \K\langle
x_1,\dots,x_n\rangle$ be a polynomial map taking values in degree $k_j$ homogeneous component of
$\K\langle x_1,\dots,x_n\rangle$. For $s=(s_1,...,s_N)\in\K^N$, $A^s$ is the algebra given by
generators $x_1,\dots,x_n$ and relations $r_1(s),\dots,r_m(s)$. Assume also that $\Lambda$ is a
$p\times q$ matrix, whose entries are degree $d$ homogeneous elements of
$\K[s_1,\dots,s_N]\langle x_1,\dots,x_n\rangle$. For every fixed $s$, we can interpret $\Lambda$ as a map
from $(A^s)^q$ to $(A^s)^p$ (treated as free right $A$-modules) acting by multiplication of the matrix $\Lambda$ by a
column vector from $(A^s)^q$.
Fix a non-negative integer $i$ and let $U$ be a non-empty Zariski open subset of $\K^N$ such that $\dim A^s_i$ and
$\dim A^s_{i+d}$ do not depend on $s$ provided $s\in U$. For $s\in\K^N$ let $\rho(i,s)$ be the rank of $\Lambda$ as a
linear map from $(A_i^s)^q$ to $(A^s_{i+d})^p$ and $\rho_{\max}(i)=\max\{\rho(i,s):s\in U\}$.
Then the set $W_i=\{s\in U:\rho(i,s)=\rho_{\max}(i)\}$ is Zariski open in $\K^N$.
\end{lemma}

\begin{proof} Let $t\in W_i$. Then $\rho(i,t)=g$, where $g=\rho_{\max}(i)$.
Pick linear bases of monomials $e_1,\dots,e_u$ and $f_1,\dots,f_v$ is $A^t_i$ and $A^t_{i+d}$ respectively.
Obviously, the same sets of monomials serve as linear bases for $A^s_i$ and $A^s_{i+d}$ respectively.
For $s$ from a Zariski open set $V\subseteq U$. Then $\Lambda$ as a linear map from
$(A_i^s)^q$ to $(A^s_{i+d})^p$ for $s\in V$ has an $u^q\times v^p$ matrix $M_s$ with respect to the said
bases. The entries of this matrix depend on the parameters polynomially. Since the rank of this matrix
for $s=t$ equals $g$, there is a square $g\times g$ submatrix whose determinant is non-zero when
$s=t$. The same determinant is non-zero for a Zariski open subset of $V$. Thus for $s$ from the last set the rank of $M_s$ is at least $g$. By maximality of $g$, the said rank equals $g$.
Thus $t$ is contained in a Zariski open set, for all $s$ from which $\rho(i,s)=g$. That is, $W_i$ is Zariski open.
\end{proof}

We are back to the proof of Proposition~\ref{minser}.
For the sake of brevity, denote $a_j=\min\{\dim A_j:A\in{\cal P}_n\}$. By our assumption, $\dim B_j=a_j$
for all $j\leq k+n$. Let $U=\{A\in{\cal P}_n:\dim A_j=a_j\ \text{for}\ j\leq k+n\}$. Then $B$ belongs
to the Zariski open set $U$ (since ${\cal P}_n$ is just a finite dimensional vector space over $\K$ we can
identify it naturally with some $\K^N$ and speak of Zariski open sets etc.). By Lemma~\ref{maxra}, the rank of
$d_3:A_k\to A_{k+1}^2$ is maximal for a Zariski generic $A\in U$. Obviously, this rank can not exceed $\dim A_k=a_k$.
On the other hand our complex is exact for $A=B$ and therefore $d_3:A_k\to A_{k+1}^2$ is injective and
has rank $\dim B_k=a_k$ for $A=B$. Hence, the set $U_1$ of $A\in U$ for which the rank of
$d_3:A_k\to A_{k+1}^2$ equals $a_k=\dim A_k$ is a non-empty Zariski open subset of $U$. Obviously, $B\in U_1$.
Since for every $A\in U_1$, $d_3:A_k\to A_{k+1}^2$ is injective and $d_3(A_k)$ is contained in the kernel of $d_2$,
the rank of $d_2:A_{k+1}^2\to A_{k+n}^2$ is at most $2a_{k+1}-a_k$. Since the complex is exact for $A=B$, the
same rank for $A=B$ equals $2a_{k+1}-a_k$, so the maximal possible rank for $A\in U_1$ is $2a_{k+1}-a_k$. Let
$U_2$ be the set of $A\in U_1$ such that the rank of $d_2:A_{k+1}^2\to A_{k+n}^2$ equals $2a_{k+1}-a_k$. By
Lemma~\ref{maxra}, $U_2$ is Zariski open. Obviously, $B\in U_2$. Then for $A\in U_1$, $d_2(A_{k+1}^2)$
has dimension $2a_{k-1}-a_k$. Since our complex is exact at the rightmost $A^2$, the dimension of
$(\ker d_1)\cap A_{k+n}^2$ is $2a_{k-1}-a_k$ for each $A\in U_2$. Since our complex is exact at the rightmost $A$,
$d_1(A_{k+n}^2)=A_{k+n+1}$. Hence $\dim A_{k+n+1}=2a_{k+n}-2a_{k+1}+a_k$ for every $A\in V_2$. Since for Zariski
generic $A\in V_2$, $\dim A_{k+n+1}=a_{k+n+1}$ and since $B\in U_2$, we have $\dim B_{k+n+1}=
a_{k+n+1}$, which completes the inductive proof.
\end{proof}

This completes the proof of the Theorem~\ref{deg40}.
\end{proof}

\section{The dimension of a potential algebra cannot be smaller than 8}\label{Dim}

  In this chapter we show that the minimal degree of a potential algebra is at least $8$. This answers a question of Michael Wemyss from \cite{Wemyss}, related to a geometric argument of Toda \cite{T}. In \cite{Wemyss}, Wemyss showed that the algebra with the potential
 $F={1\over 3}(y^{2}x+yxy+xy^{2})+x^{3}+x^{4}$
  has degree $9$. It is not known if there exists  a potential algebra whose dimension is  $8$.

The idea behind  our proof is  to embed our free algebra in a power series ring, and then consider an analog of the Diamond Lemma for power series rings. We use a short proof which is based on this idea  by considering the associated truncated algebra (see the  proof of Theorem ~\ref{important}).

Recall that, as above, $\K\langle x,y \rangle $ is the free associative algebra in $2$ variables, $F\in \langle x,y\rangle $ is a cyclic invariant polynomial.

\begin{lemma}\label{one}
Let $F\in \K\langle x,y\rangle $ be a cyclic invariant polynomial which is a  linear combination of elements of degree $3$ or larger, then
\[ \textstyle \left[x, {\frac {\partial F}{\partial x}}\right]
 =\left[{\frac {\partial F}{\partial y}}, y\right]\]

 Moreover, if $F$ is homogeneous of degree $3$ then elements
 \[ \textstyle x\cdot {\frac {\partial F}{\partial x}}-{\frac {\partial F}{\partial x}}\cdot x, y\cdot {\frac {\partial F}{\partial x}}-{\frac {\partial F}{\partial x}}\cdot y, {\frac {\partial F}{\partial y}}\cdot x-x\cdot {\frac {\partial F}{\partial y}}\]
are linearly dependent over $\K$.
In particular there are $\alpha _{1}, \alpha _{2}, \alpha _{3}\in \K$ (not all zero) such that
$\alpha _{1}\cdot (x\cdot {\frac {\partial F}{\partial x}}-{\frac {\partial F}{\partial x}}\cdot x)+ \alpha _{2}( y\cdot {\frac {\partial F}{\partial x}}-{\frac {\partial F}{\partial x}}\cdot y)+\alpha _{3}\cdot ({\frac {\partial F}{\partial y}}\cdot x-x\cdot {\frac {\partial F}{\partial y}})=0.$
\end{lemma}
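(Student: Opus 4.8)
The plan is to verify both statements by direct computation with cyclic words, keeping the first claim purely formal and the second claim a matter of linear algebra on a small, concrete list of monomials. For the first assertion, note that by Lemma~\ref{le1} (here appearing earlier in the paper with the hypothesis $F_0=F_1=0$, which is subsumed by the present hypothesis that $F$ is a combination of terms of degree $\geq 3$), cyclic invariance of $F$ is equivalent to $[x,\partial_x F]+[y,\partial_y F]=0$. Rewriting this as $x\,\partial_x F-\partial_x F\,x = \partial_y F\,y - y\,\partial_y F$ gives exactly $\left[x,\tfrac{\partial F}{\partial x}\right]=\left[\tfrac{\partial F}{\partial y},y\right]$. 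So the first part is immediate from Lemma~\ref{le1} and needs no further work beyond citing it.

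For the second (homogeneous degree $3$) assertion, the approach is to use the classification already carried out in the proof of Theorem~\ref{deg3}: up to a linear change of variables (which, as recorded there, preserves the span of the two derivatives, and here also transforms the three bracket-elements among themselves by the same invertible linear substitution), we may assume $F$ is one of the three normal forms $x^3$, $x^2y+xyx+yx^2$, or $x^2y+xyx+yx^2+xy^2+yxy+y^2x$. In each case $\partial_x F$ and $\partial_y F$ are explicit quadratic polynomials (respectively $(x^2,0)$, $(xy+yx,\,x^2)$, and $(xy+yx+y^2,\,x^2+xy+yx)$). I would then simply compute the three degree-$3$ elements $x\partial_x F-\partial_x F\,x$, $y\partial_x F-\partial_x F\,y$, and $\partial_y F\,x-x\partial_y F$ in $\K\langle x,y\rangle$ and exhibit an explicit nonzero triple $(\alpha_1,\alpha_2,\alpha_3)$ killing their combination. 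For $F=x^3$ the first element is $[x,x^2]=0$, so $(\alpha_1,\alpha_2,\alpha_3)=(1,0,0)$ works trivially. For the other two cases one checks that the three elements span a space of dimension at most $2$ inside the $8$-dimensional degree-$3$ part; concretely, for $F=x^2y+xyx+yx^2$ one has $x\partial_x F-\partial_x F\,x=0$ again (since $\partial_x F = xy+yx$ — wait, here $[x,xy+yx]=x^2y+xyx-xyx-yx^2=x^2y-yx^2\neq 0$), so instead one verifies the single linear dependence coming from the identity $[x,\partial_x F]+[y,\partial_y F]=0$ together with the specific shape of these relations.

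Actually the cleanest route for the homogeneous degree $3$ case is to avoid case analysis entirely: observe that $\partial_x F$ and $\partial_y F$ are \emph{linear combinations of $xy+yx$, $x^2$, $y^2$} in a controlled way — more precisely, each degree-$2$ component $\partial_* F$ lies in the $3$-dimensional space of cyclically symmetric quadratics, and the three bracket operations $Q\mapsto [x,Q]$, $Q\mapsto[y,Q]$ land in a space where symmetry forces relations. I would make this precise by writing $\partial_x F = px^2 + q(xy+yx) + ry^2$ and $\partial_y F = q x^2 + r(xy+yx)+ sy^2$ (the cross-coefficients forced to agree by cyclic invariance of $F$), then computing all three brackets in terms of the basis $\{x^2y, xyx, yx^2, xy^2, yxy, y^2x\}$ of the relevant subspace of degree $3$, and reading off that the $3\times 6$ coefficient matrix has rank at most $2$.

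The main obstacle is purely bookkeeping: getting the coefficient relations between $\partial_x F$ and $\partial_y F$ right (this is where cyclic invariance of $F$ enters in degree $3$), and then carefully expanding the three commutators without sign or ordering errors. There is no conceptual difficulty — the first part is a restatement of Lemma~\ref{le1}, and the second part is a finite rank computation in a $6$-dimensional space — so the write-up should consist of the identity $[x,\partial_x F]=[\partial_y F,y]$ followed by an explicit display of the three bracket-elements and the witnessing coefficients $\alpha_i$.
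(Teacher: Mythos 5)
Your final (``cleanest route'') argument is correct and is essentially the paper's own proof: the first identity is exactly the restatement of Lemma~\ref{le1}, and the degree-$3$ claim is settled by the same direct computation showing that all three commutators lie in the two-dimensional span of $x^2y-yx^2$ and $xy^2-y^2x$ (the paper does this term-by-term over the basis cyclic monomials $x^3$, $y^3$, $x^2y+xyx+yx^2$, $xy^2+yxy+y^2x$, while your parametrization $\partial_x F=px^2+q(xy+yx)+ry^2$, $\partial_y F=qx^2+r(xy+yx)+sy^2$ packages the same computation by linearity). You were right to discard the normal-form detour --- the three bracket elements are not literally carried to one another by a linear substitution, so that reduction would need extra justification --- but since you replaced it with the direct rank-at-most-$2$ computation, the proposal stands.
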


\begin{proof}  The first part follows from Lemma ~\ref{le1}.
For the second part, observe that  $F$ is of degree $3$, hence it is a linear combination
of elements $x^{3}, y^{3}, x^{2}y+xyx+yx^{2}, y^{2}x+ yxy+ xy^{2}$.

We can write elements
\[ \textstyle x\cdot {\frac {\partial F}{\partial x}}-{\frac {\partial F}{\partial x}}\cdot x, y\cdot {\frac {\partial F}{\partial x}}-{\frac {\partial F}{\partial x}}\cdot y, {\frac {\partial F}{\partial y}}\cdot x-x\cdot {\frac {\partial F}{\partial y}}\]
for each $F\in \{y^{3}, x^{3}, x^{2}y+xyx+yx^{2}, y^{2}x+yxy+xy^{2}\}$ and observe that in each case these elements are linear
combination of elements \[x^{2}y-yx^{2}, y^{2}x-xy^{2}.\]
\end{proof}

\begin{lemma}\label{two}
Let $F\in \K\langle x, y\rangle $ be a cyclic invariant polynomial which is homogeneous of degree $3$.  Then ${\frac {\partial F}{\partial x}}, {\frac {\partial F}{\partial y}}\in span _{\K}\{x^{2}, y^{2}, xy+yx\}$.
Moreover if ${\frac {\partial F}{\partial x}}$ and ${\frac {\partial F}{\partial y}}$ are linearly independent over $\K$ then the set
\[ \textstyle S=\{ {\frac {\partial F}{\partial x}}\cdot x, {\frac {\partial F}{\partial x}}\cdot y, x\cdot {\frac {\partial F}{\partial x}}, y\cdot {\frac {\partial F}{\partial x}}, {\frac {\partial F}{\partial y}}\cdot x, {\frac {\partial F}{\partial y}}\cdot y, x\cdot {\frac {\partial F}{\partial y}}, y\cdot {\frac {\partial F}{\partial y}}\}\]
spans a vector space over the field $\K$ of dimension at least $7$. Moreover, if the dimension is $7$, then ${\frac {\partial F}{\partial x}}$ and ${\frac {\partial F}{\partial y}}$ form a Gr{\" o}bner Bases.
\end{lemma}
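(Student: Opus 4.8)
\emph{Proof plan.} The first assertion is a direct computation. A cyclicly invariant homogeneous cubic has the form
$$F=ax^3+b(x^2y+xyx+yx^2)+c(xy^2+yxy+y^2x)+dy^3,$$
so that $\frac{\partial F}{\partial x}=ax^2+b(xy+yx)+cy^2$ and $\frac{\partial F}{\partial y}=bx^2+c(xy+yx)+dy^2$, and both lie in $\mathrm{span}_{\K}\{x^2,y^2,xy+yx\}$. I would note along the way that the coefficients of the two partials assemble into the $2\times 3$ Hankel matrix $\left(\begin{smallmatrix}a&b&c\\ b&c&d\end{smallmatrix}\right)$, so $\frac{\partial F}{\partial x}$ and $\frac{\partial F}{\partial y}$ are $\K$-linearly independent precisely when this matrix has rank $2$.

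For the dimension statement, put $p=\frac{\partial F}{\partial x}$, $q=\frac{\partial F}{\partial y}$. The eight elements of $S$ span the degree-$3$ homogeneous component $(I_F)_3$ of the ideal $I_F=\mathrm{Id}(p,q)$, which lies inside the $8$-dimensional space $W$ of homogeneous degree-$3$ elements of $\K\langle x,y\rangle$; so the task is to bound below the dimension of their span, i.e.\ to bound the number of $\K$-linear relations among them. One relation holds automatically, $xp-px-qy+yq=0$ (that is, $[x,p]=[q,y]$; Lemma~\ref{one}). The cleanest route I see is to use that the dimension of the $\K$-span of $S$ is invariant under an invertible linear substitution of $x,y$: both $\mathrm{span}\{p,q\}$ and the images of left- and right-multiplication by the two generators are preserved, as in the elementary remark used in the proof of Theorem~\ref{deg3}. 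Since $\K$ is algebraically closed and $F^{\mathrm{ab}}$ is a binary cubic, $F$ is then equivalent to one of $x^3$, $x^2y+xyx+yx^2$, $x^2y+xyx+yx^2+xy^2+yxy+y^2x$; the first has dependent partials and is excluded, and for each of the other two I would compute the span of the corresponding eight degree-$3$ words directly and read off the claimed bound. (Alternatively, one computes the rank of the relevant $8\times 8$ matrix once, in terms of $a,b,c,d$, on the rank-$2$ locus of the Hankel matrix.)

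For the Gröbner basis assertion, fix the degree-lexicographic order with $x>y$. When $\mathrm{LM}(p)\ne\mathrm{LM}(q)$ — which one checks holds on the two relevant normal forms, and which, I would argue, is forced by the span of $S$ having the extremal dimension asserted — the two quadratic leading monomials overlap in at most one way, so $\{p,q\}$ produces a single S-polynomial. It remains to reduce that S-polynomial against $p$ and $q$ in each of the finitely many configurations of $(\mathrm{LM}(p),\mathrm{LM}(q))$ and check it rewrites to $0$; the Diamond Lemma then gives that $\{p,q\}$ is a Gröbner basis of $I_F$, whereupon $\dim(I_F)_3$ equals the number of length-$3$ words divisible by $\mathrm{LM}(p)$ or $\mathrm{LM}(q)$ — matching the dimension found in the previous step and identifying the equality case.

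The step I expect to be the main obstacle is the dimension count: controlling \emph{all} of the linear relations among the eight products, not merely the syzygy coming from cyclic invariance. Either route above reduces this to a short list of fully explicit checks — three normal forms, or the handful of possible pairs of leading monomials — so it is the careful bookkeeping, rather than any single idea, that is the substance; once the two leading monomials are known to be distinct, the Gröbner-basis step is routine.
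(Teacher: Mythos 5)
Your first assertion and the idea of exploiting linear substitutions are fine: $\mathrm{span}_{\K}(S)=xV+yV+Vx+Vy$ with $V=\mathrm{span}_\K\{\partial_xF,\partial_yF\}$, and $V$ transforms correctly under a linear change of variables, so $\dim\mathrm{span}_\K(S)$ can indeed be evaluated on the normal forms of Theorem~\ref{deg3}. But the step you defer --- ``compute the span of the corresponding eight degree-$3$ words directly and read off the claimed bound'' --- is precisely where the plan breaks: the computation does not return $7$. For $F=x^2y+xyx+yx^2$ one has $p=\partial_xF=xy+yx$, $q=\partial_yF=x^2$, linearly independent, yet besides the cyclic syzygy $xp-px-qy+yq=0$ there is the second, independent relation $xq-qx=0$, and the eight products span only a $6$-dimensional space; the same happens for $x^2y+xyx+yx^2+xy^2+yxy+y^2x$, where $[x,p-q]+[y,p]=0$ is the second relation. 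This is not an accident of the normal forms: the second part of Lemma~\ref{one} (which you cite only for the cyclic relation) produces, for every homogeneous cubic, a dependency $\alpha_1[x,p]+\alpha_2[y,p]+\alpha_3[q,x]=0$ with some $\alpha_i\neq0$; it involves neither $qy$ nor $yq$, hence is never proportional to the cyclic relation, so the eight products always admit at least two independent linear relations and their span has dimension at most $6$. Your own closing consistency check exposes the same clash: if $\{p,q\}$ is a Gr\"obner basis with two distinct quadratic leading monomials, the number of degree-$3$ words divisible by one of them is $8-2=6$, not $7$. So your plan of ``controlling all the relations'' misses the second universal one, and the deferred verification would refute, not confirm, the bound you set out to prove; taking the printed ``at least $7$'' at face value is already in tension with Lemma~\ref{one}, and what the paper's argument actually establishes and uses later (Lemma~\ref{three}, Theorem~\ref{important}) is that for independent $p,q$ the pair is a Gr\"obner basis and exactly two degree-$3$ monomials survive modulo the ideal, i.e.\ $\dim\mathrm{span}_\K(S)=6$, whereas a failure of the Gr\"obner property would force $\dim\mathrm{span}_\K(S)\geq7$.

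Two further points about your Gr\"obner step. The claim that the extremal dimension ``forces'' $\mathrm{LM}(p)\neq\mathrm{LM}(q)$ is circular as written (the dimension is what you are trying to determine); the correct normalization is simply to replace $q$ by $q-\lambda p$, which changes neither $V$ nor $\mathrm{span}_\K(S)$ and makes the leading monomials distinct. Moreover, reduction to normal forms is legitimate for the dimension count but not for the Gr\"obner assertion, since being a Gr\"obner basis for a fixed monomial order is not invariant under linear substitutions, and within a fixed pair of leading monomials the resolution of the single overlap still depends on the coefficients $a,b,c,d$, so this is a parametric computation rather than a finite list of configurations. The paper's proof avoids both issues by arguing intrinsically: with distinct leading monomials $n(1),n(2)\in\{x^2,xy,y^2\}$ exactly two degree-$3$ monomials avoid both as subwords, these together with $S$ span the whole degree-$3$ component, and an unresolved ambiguity would produce a nonzero element of the ideal supported on those two monomials, pushing $\dim\mathrm{span}_\K(S)$ to at least $7$ --- which the two relations coming from Lemma~\ref{one} rule out; hence the ambiguity resolves, $\{p,q\}$ is a Gr\"obner basis, and the two avoided monomials form a complement of $\mathrm{span}_\K(S)$ in the degree-$3$ component. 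No classification of cubics is needed for this, and it is this surviving two-dimensional monomial complement, not the numerical value $7$, that is the input to Theorem~\ref{important}.
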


\begin{proof}
Observe that ${\frac {\partial F}{\partial x}}, {\frac {\partial F}{\partial y}} \in { span} _{\K}\{x^{2}, y^{2}, xy+yx\}$ since \[F\in span _{\K}\{x^{3},y^{3},  x^{2}y+xyx+ yx^{2}, y^{2}x+ yxy+ xy^{2}, y^{3}\}.\]
We can introduce the lexicographical ordering on the set  of monomials in $x,y$, with $x>y$.
Notice that the leading monomials of ${\frac {\partial F}{\partial x}}$ and ${\frac {\partial F}{\partial y}}$ are in the set $\{x^{2}, xy, y^{2}\}$ since ${\frac {\partial F}{\partial x}}, {\frac {\partial F}{\partial y}}\in span _{\K}\{x^{2}, y^{2}, xy+yx\}$.
Let $n(1)$ be the leading monomial of ${\frac {\partial F}{\partial x}}$ and $n(2)$ be the leading monomial of ${\frac {\partial F}{\partial y}}$. We have  ${\frac {\partial F}{\partial x}}=n(2)k(1)+g(1)$ and ${\frac {\partial F}{\partial y}}=n(2)k(2)+g(2)$ for some $k(1), k(2)\in \K$ and some $g(1), g(2)\in \K\langle x,y\rangle $.

  Consider monomials of degree $3$ in $\K\langle x,y\rangle $ which don't contain either $n(1)$ nor $n(2)$ as a subword.
Then, there are exactly $2$  such monomials, call them $t(1), t(2)$, since $n(1), n(2)\in \{xx,xy, yy\}.$ This can be shown by considering all the possible cases for $n(1)$ and $n(2)$.


  Notice that, every monomial of degree $3$ is a linear combination of $t(1)$ and $t(2)$, and elements from the set $S$. The linear space spanned by elements $t(1)$ and $t(2)$ will be denoted $T$.

 Let $Q$ be a linear space such that \[Q\oplus span _{\K}S=A(3)\] where $A(3)$ is the linear space of elements of degree $3$ in $\K\langle x,y\rangle$ (where $x$ and $y$ have the usual gradation $1$).  We can assume that $Q\subseteq T$.

  Suppose that we have applied the Diamond Lemma to relations ${\frac {\partial F}{\partial x}}$ and ${\frac {\partial F}{\partial y}}$ to resolve ambiguities involving $n(1)$ and $n(2)$. If there is some of ambiguity which doesn't resolve (this happens exactly when  ${\frac {\partial F}{\partial x}}$ and ${\frac {\partial F}{\partial y}}$ are not Gr{\" o}bner bases),
then we have a  relation of degree $3$ which  has the leading monomial which doesn't contain neither $n(1)$ nor $n(2)$ as a subword (by construction this relation is in the ideal generated by ${\frac {\partial F}{\partial x}}$ and ${\frac {\partial F}{\partial y}}$).

 Consequently, $Q$ is a proper subspace of the linear space of elements of degree $3$ which don't contain $n(1)$ and $n(2)$ as a subword, therefore $Q$ has dimension smaller than $2$ (recall that $T$ has dimension $2$). It follows that $S$ has dimension larger than $6$.

 Therefore, if ${\frac {\partial F}{\partial x}}$ and ${\frac {\partial F}{\partial y}}$ don't form a Gr{\" o}bner bases then $S$ spans a linear space of dimension at least $7$.

     On the other hand, if ${\frac {\partial F}{\partial x}}$ and ${\frac {\partial F}{\partial y}}$ form a Gr{\" o}bner bases then all ambiguities are resolved, so $Q=T$
by the Diamond Lemma (since our algebra is graded), and so $S$ spans a vector space of dimension exactly $7$.
 \end{proof}

\begin{lemma}\label{three}
Let notation be as in Lemma ~\ref{two}.
Let ${\frac {\partial F}{\partial x}}, {\frac {\partial F}{\partial y}}$ be linearly independent over $\K$, then  $S$ spans a linear space of dimension exactly $7$.  Moreover, ${\frac {\partial F}{\partial x}}, {\frac {\partial F}{\partial y}}$form a Gr{\" o}bner basis.
\end{lemma}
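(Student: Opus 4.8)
The plan is to sandwich $\dim_\K \mathrm{span}_\K S$ between the lower bound already supplied by Lemma~\ref{two} and a matching upper bound extracted from Lemma~\ref{one}. Since $\partial_x F$ and $\partial_y F$ are assumed linearly independent over $\K$, Lemma~\ref{two} gives immediately $\dim_\K \mathrm{span}_\K S \geq 7$; so it suffices to exhibit one nontrivial $\K$-linear relation among the eight generators of $S$, which will force $\dim_\K \mathrm{span}_\K S \leq 7$ and hence equality.

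Such a relation is precisely the content of the second part of Lemma~\ref{one}: because $F$ is homogeneous of degree $3$, there are $\alpha_1,\alpha_2,\alpha_3\in\K$, not all zero, with
$$\alpha_1\bigl(x\cdot\partial_x F-\partial_x F\cdot x\bigr)+\alpha_2\bigl(y\cdot\partial_x F-\partial_x F\cdot y\bigr)+\alpha_3\bigl(\partial_y F\cdot x-x\cdot\partial_y F\bigr)=0.$$
Reading the left-hand side as a linear combination of the elements of $S$, ordered as $\partial_x F\cdot x,\ \partial_x F\cdot y,\ x\cdot\partial_x F,\ y\cdot\partial_x F,\ \partial_y F\cdot x,\ \partial_y F\cdot y,\ x\cdot\partial_y F,\ y\cdot\partial_y F$, its coefficient vector is $(-\alpha_1,-\alpha_2,\alpha_1,\alpha_2,\alpha_3,0,-\alpha_3,0)$, which is nonzero exactly because the $\alpha_i$ are not all zero. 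Thus the eight elements of $S$ are linearly dependent, so $\dim_\K \mathrm{span}_\K S\leq 7$, and combining the two bounds gives $\dim_\K \mathrm{span}_\K S=7$.

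Finally, I would invoke the last clause of Lemma~\ref{two}, which states that whenever $\mathrm{span}_\K S$ has dimension exactly $7$ the pair $\partial_x F,\partial_y F$ forms a Gr\"obner basis of the ideal it generates; applying it with the equality just established yields the second assertion. The only point that genuinely needs checking is that the relation furnished by Lemma~\ref{one} is nontrivial \emph{as a relation among the spanning set} $S$ — but that is transparent from the displayed coefficient vector — so there is no real obstacle: the statement is essentially the conjunction of Lemmas~\ref{one} and~\ref{two}.
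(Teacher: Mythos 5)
Your proposal is correct and follows essentially the same route as the paper: the paper's own proof likewise gets $\dim_\K\mathrm{span}_\K S\leq 7$ from the nontrivial relation supplied by Lemma~\ref{one} (you use its second, degree-$3$ clause; the identity $[x,\partial_xF]=[\partial_yF,y]$ from its first clause works just as well), and then obtains the lower bound $\geq 7$ and the Gr\"obner basis conclusion from Lemma~\ref{two}. No gaps; your explicit check that the coefficient vector is nonzero is the only detail the paper leaves implicit.
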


\begin{proof}
Observe that by Lemma ~\ref{one} the dimension of the linear space spanned by $S$ is at most $7$. By Lemma ~\ref{two} the dimension is $7$. The result then follows from Lemma ~\ref{two}.
\end{proof}

   Denote by $A(i)$ the linear subspace of $\K\langle x,y\rangle $  spanned by monomials of degree $i$.

      \begin{theorem}\label{important}
Let $\K$ be a field. Let $G\in \K\langle x,y\rangle $ be a cyclic invariant polynomial which is a linear combination of monomials of degrees larger than two.
    Then $A_G=\K\langle x,y\rangle /I_G$ has at least $8$ elements linearly independent over $\K$.
\end{theorem}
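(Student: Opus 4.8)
The plan is to reduce the general (not necessarily homogeneous) case to the homogeneous degree-$3$ situation already analysed in Lemmas~\ref{one}--\ref{three}, and then to argue that $\dim_\K A_G \geq 8$ by a counting argument on low-degree components. Write $G = G_3 + G_4 + \cdots$ for the homogeneous decomposition, so $\partial_x G = (\partial_x G_3) + (\partial_x G_4) + \cdots$ and similarly for $\partial_y G$, where the degree-$2$ parts are exactly the derivatives of the cubic part $F := G_3$. I would first dispose of the degenerate cases: if $\partial_x G$ and $\partial_y G$ are both zero, then $A_G = \K\langle x,y\rangle$ is infinite dimensional; more generally one must handle the case where $G_3 = 0$ or where $\partial_x F, \partial_y F$ are linearly dependent over $\K$ (including the case $F = x^3$ up to a linear change of variables, by the classification in Theorem~\ref{deg3}). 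In those cases the ideal $I_G$ has at most one relation in degree $2$ (or none), and a direct dimension count through degrees $0,1,2,3$ already gives at least $1 + 2 + (\geq 3) + (\geq \text{something})$; I expect $F=x^3$ and the rank-$1$ cases to need a short separate check, possibly invoking that the associated graded / truncated algebra is a quotient and comparing with the homogeneous results.

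The main case is when $\partial_x F$ and $\partial_y F$ are linearly independent. Here the key idea, following the ``associated truncated algebra'' strategy advertised before the theorem, is to pass to the truncated algebra $A_G^{(3)} = A_G / \mathrm{span}\{u : \deg u > 3\}$, or equivalently to work with the filtration by degree and its associated graded object. Since $G$ starts in degree $3$, the degree-$\leq 2$ part of $I_G$ is spanned precisely by $\partial_x F$ and $\partial_y F$, and the degree-$3$ part of $I_G$ is spanned by the eight products $(\partial_x F)x, (\partial_x F)y, x(\partial_x F), y(\partial_x F), (\partial_y F)x, (\partial_y F)y, x(\partial_y F), y(\partial_y F)$ \emph{together with possible degree-$3$ contributions coming from $\partial_x G_4$ and $\partial_y G_4$} — but those latter contributions, being in $\mathrm{span}_\K\{$degree-$3$ monomials$\}$, can only \emph{increase} the span, hence only \emph{decrease} $\dim A_G$ in degree $3$ in the worst case; to control this one compares with the homogeneous algebra $A_F$ (with $F$ the cubic part), whose degree-$\leq 3$ component has dimension governed by Lemma~\ref{three}: the span $S$ has dimension exactly $7$, so $\dim (A_F)_3 = \dim A(3) - 7 = 8 - 7 = 1$.

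Assembling the count: $\dim(A_G)_0 = 1$, $\dim(A_G)_1 = 2$, and $\dim(A_G)_2 = 4 - \dim\,\mathrm{span}_\K\{\partial_x F, \partial_y F\} = 4 - 2 = 2$ (using linear independence and that $G_4,\dots$ contribute nothing in degree $2$). For degree $3$ one needs $\dim(A_G)_3 \geq 1$; this is where the real work lies. The obstacle is that when $G$ is genuinely non-homogeneous the eight degree-$3$ generators of $I_G \cap A(3)$ are perturbations of the homogeneous ones by the derivatives of $G_4$, so Lemma~\ref{one}'s linear dependence $[\partial_x F, x] = [y, \partial_y F]$ need not survive literally; but the perturbation lies in degree-$3$ monomials and one argues that the \emph{associated graded} relations still carry the dependence, so $\dim (I_G \cap A(3)) \leq 7$ after accounting for it, giving $\dim(A_G)_3 \geq 8 - 7 = 1$. (If instead $\partial_x F, \partial_y F$ are dependent or $F$ is in the $x^3$ orbit, one gets extra room in degree $2$, which compensates.) Summing, $\dim_\K A_G \geq 1 + 2 + 2 + 1 + \cdots$; but this only gives $6$, so the argument must go one step further — through degree $4$ — using that once $\dim(A_G)_3 \geq 1$ the truncated algebra is infinite dimensional or at least has dimension $\geq 2$ in degrees $3$ and $4$ by a Golod--Shafarevich-type lower bound (Proposition~\ref{pro1}/\ref{pro2} give $H_{A_G}$ coefficient-wise $\geq$ a positive series, which in degrees $3,4$ already forces $\dim(A_G)_3 + \dim(A_G)_4 \geq 3$). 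I expect the delicate point, and the one requiring the most care, to be this final bookkeeping: proving that the non-homogeneous perturbations cannot make degree-$3$ or degree-$4$ components collapse below what the homogeneous analysis and the Golod--Shafarevich estimate guarantee, so that the running total reaches $8$.
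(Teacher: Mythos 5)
Your skeleton---split off the cubic part $F=G_3$ of $G$, pass to a truncation, and count surviving dimensions degree by degree---is the same as the paper's, and your counts in degrees $0,1,2$ agree with it (note, though, that the paper truncates modulo all monomials of degree $\geq 5$, not $\geq 4$ as your $A_G^{(3)}$ does, precisely because a degree-$4$ survivor is needed). The genuine gap is in degrees $3$ and $4$, exactly where you say ``the real work lies'', and the patch you propose cannot work. You invoke Propositions~\ref{pro1} and~\ref{pro2} to force $\dim(A_G)_3+\dim(A_G)_4\geq 3$, but those statements require the potential to have terms only of degree $\geq 7$ (respectively to be homogeneous of degree $\geq 6$); here the relations have degree $2$, and the Golod--Shafarevich series for two quadratic relations, $\left|\frac{1}{1-2t+2t^2}\right|=1+2t+2t^2$, has coefficient $0$ in degree $3$ and a negative coefficient in degree $4$, so it forces nothing beyond degree $2$. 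No bound that sees only the number and degrees of the relations can reach $8$: there are two-generated algebras with two quadratic relations of total dimension $5$ (for instance $x^2+xy+yx$ and $y^2+2xy$, whose eight degree-$3$ products already span all of $A(3)$), so the potential structure must enter in degrees $3$ and $4$, which is the whole point of the theorem. Hence the step that is supposed to lift your running total from $6$ to $8$ has no proof.

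What the paper does instead, and what is missing from your sketch, is a syzygy-based count inside the truncation. Let $J$ be the ideal generated by $\partial_xG,\partial_yG$ and all monomials of degree $\geq 5$, and $I$ its analogue for the cubic part $F$. In degree $3$: if a nontrivial combination $m$ of the two monomials $m(1),m(2)$ surviving modulo $I$ (they exist by Lemma~\ref{three}) lay in $J+A(4)$, then, having no degree-$2$ component, $m$ would lie modulo higher degrees in the span of the eight products of $x,y$ with $\partial_xG,\partial_yG$, so its degree-$3$ part would lie in the corresponding span for $F$, i.e.\ $m\in I$, a contradiction; your phrase ``the associated graded relations still carry the dependence'' is a placeholder for precisely this argument. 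In degree $4$, which your proposal essentially skips, the paper uses Lemma~\ref{one} --- both the identity $[x,\partial_xF]=[\partial_yF,y]$ and the additional linear dependence among the three commutators, which is special to cubic potentials --- to show that the only degree-$4$ elements of $J$ beyond those of $I$ come from a single element $q$ built from $H=G-F$, so $J\cap A(4)=I\cap A(4)+\K q$ has dimension at most $15<16=\dim A(4)$ and some degree-$4$ monomial $n$ survives. This produces the eight elements $1,x,y,p(1),p(2),m(1),m(2),n$ and the bound $\dim A_G\geq 8$; the degenerate case where $\partial_xF,\partial_yF$ are dependent (including $G_3=0$) is settled in the paper not by ``extra room in degree $2$'' but by exhibiting three surviving degree-$3$ monomials, a case your proposal also leaves open.
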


\begin{proof} We can write $G=F+H$ where $F\in \K\langle x, y\rangle $ is a cyclic invariant polynomial which is  homogeneous of degree $3$ and $H\in \K\langle x,y\rangle $ is a cyclic invariant polynomial which is a  linear combination of monomials of degrees larger than three.  Let $J$ be the ideal generated by  $\frac {\partial G}{\partial x}$ and $\frac {\partial G}{\partial y}$ and  all monomials of degree $5$. Let  $I$ be the ideal generated by $\frac {\partial F}{\partial x}$ and $\frac {\partial F}{\partial y}$ and all monomials of degree $5$.
Clearly $1,x,y\notin J +A(2)+A(3)+A(4)$ since $\frac {\partial G}{\partial x}$, $\frac {\partial G}{\partial y}$ are linear combination of monomials with degrees larger than $2$. We will consider two cases.

{\bf Case 1.}  Suppose that $\frac {\partial F}{\partial x}$, $\frac {\partial F}{\partial y}$ are linearly independent over $\K$. Notice that there are  $2$ monomials of degree $2$, call then $p(1), p(2)$, such that any nontrivial linear combination of these monomials doesn't belong to $I+A(3)+A(4)$, and hence  doesn't belong to $J+A(3)+A(4)$, since $I+A(3)+A(4)=J+A(3)+A(4)$.

We claim that there are exactly $2$ monomials $m(1), m(2)$ of degree $3$ such that every non-trivial linear combination of $m(1)$ and $m(2)$ is not in $J+A(4)$.
Let $m(1), m(2)$ be monomials of degree $3$ such that   every nontrivial linear combination of $m(1)$ and $m(2)$ is not in $I+A(4)$. By Lemma ~\ref{three} such monomials $m(1), m(2)$ exist.
We will show that this is a good choice of $m(1), m(2)$.  Suppose on the contrary that there is $m$ which is a nontrivial linear combination  of $m(1)$ and $m(2)$ and $m\in J+A(4)$. It follows that $m\in \K\cdot \frac {\partial G}{\partial x}+\K\cdot \frac {\partial G}{\partial y}+S'+A(4)+\sum_{i=5}^{\infty }A(i)$ where
$S'=span _{\K}\{x\cdot \frac {\partial G}{\partial x}$, $x\cdot \frac {\partial G}{\partial y}$, $y\cdot \frac {\partial G}{\partial x}$, $y\cdot \frac {\partial G}{\partial y}, \frac {\partial G}{\partial x}\cdot x, \frac {\partial G}{\partial y}\cdot x, \frac {\partial G}{\partial x}\cdot y, \frac {\partial G}{\partial y}\cdot y\}$. Since $m$ has no components of degree $2$ then $m\in S'+A(4)+\sum_{i=5}^{\infty }A(i)$. Recall that $m\in A(3)$.
If follows that $m$ is a linear combination of elements from $S''=span _{\K}\{x\cdot \frac {\partial F}{\partial x}$, $x\cdot \frac {\partial F}{\partial y}$, $y\cdot \frac {\partial F}{\partial x}$, $y\cdot \frac {\partial F}{\partial y}, \frac {\partial F}{\partial x}\cdot x, \frac {\partial F}{\partial y}\cdot x, \frac {\partial F}{\partial x}\cdot y, \frac {\partial F}{\partial y}\cdot y\}$.
Therefore $m\in I+A(4)$, and since $I$ is homogeneous $m\in I$, a contradiction.

We now claim that there is a monomial $n\in A(4)$ such that $n\notin J$.
Observe first that if $m\in J\cap A(4)$ then $m\in \K\frac {\partial G}{\partial x}+\K\frac {\partial G}{\partial y}+ S'+S'A(1)+A(1)S'+\sum_{i=5}^{\infty }A(i)$. Recall that $m$ has no terms of degree $2$; hence  $m\in S'+S'A(1)+A(1)S'+\sum_{i=5}^{\infty }A(i)$.
Let $m=m'+m''$ where $m'\in S'$ and $m''\in S'A(1)+A(1)S'+\sum_{i=5}^{\infty }A(i)=I\cap A(4)+\sum_{i=5}^{\infty }A(i)$.

 Observe that since  $m$ has no terms of degree $3$ then $m'$ is a linear combination of elements  $x\cdot \frac {\partial H}{\partial x}-\frac {\partial H}{\partial x}\cdot x+y\cdot \frac {\partial H}{\partial y}-\frac {\partial H}{\partial y}\cdot y$ (this element is zero by Lemma ~\ref{one})
and element $q=\alpha _{1}\cdot (x\cdot {\frac {\partial H}{\partial x}}-{\frac {\partial H}{\partial x}}\cdot x)+ \alpha _{2}( y\cdot {\frac {\partial H}{\partial x}}-{\frac {\partial H}{\partial x}}\cdot y)+\alpha _{3}\cdot ({\frac {\partial H}{\partial y}}\cdot x-x\cdot {\frac {\partial H}{\partial y}})=0$ where $\alpha _{1}, \alpha _{2}, \alpha _{3}$ are as in Lemma ~\ref{one}.

 Therefore $A(4)\cap J= A(4)\cap I+ \K\cdot q$, hence $A(4)\cap J$ has dimension at most $15$, so   there exists a monomial $n\in A(4)$ such that $n\notin J$.

The conclusion:
by the construction any non-trivial linear combination of elements $1$, $x$, $y$, $p(1)$, $p(2)$, $m(1)$, $m(2)$, $n$ is not in $J$, therefore  $\K\langle x,y\rangle  /J$ has at least dimension $8$.

{\bf Case 2.} It is done similarly, with the same notation. In fact it is a bit easier, since  there are at least $3$ monomials $m(1), m(2), m(3)$ of degree $3$ whose nontrivial linear combinations are not in $J+A(4)$, so elements $1, x, y, p(1), p(2), n(1), n(2), n(3)$ and their nontrivial linear combinations are not in $J$.
\end{proof}

\section{ $A_{con}$ and  conjecture of  Wemyss} \label{Dim2}
In this chapter we will consider algebras related to contraction algebras $ A_{con}$.
Over the last decade or so,  a series of new  ideas have been developed on how to describe properties of certain structures in geometry using noncommutative rings such as
reconstruction algebra, MMA and $A_{con}$.
These rings can be described via generators and relations, and they  can be studied using the Gold-Shafarevich theorem and other methods coming from noncommutative ring theory.  Maximal modification algebras (MMA)
 were developed   by Iyama and Wemyss in \cite{IW}. $A_{con}$ is  certain factor of MMA, \cite{WD}.
 Given a commutative 3-dimensional ring R, MMA  is not known to exist in full generality, but exists under mild assumptions.
 Below we give some information about $A_{con}$  provided  by Michael Wemyss  in an e-mail \cite{Wemyss}.


 If R is a 3-dimensional commutative ring with MMA $A$, then if $A$ has finite global dimension, then by a result of Van den Bergh \cite{MB}  it follows that (after completing the ring) the relations of $A$ come from a potential.  This implies that $A_{con}$ comes from a potential as well, \cite{WD, Wemyss}.

 It was shown by Michael Wemyss that the  completion  of the algebra with the potential
$F= x^{2}y+xyx+yx^{2}+y^{3}+y^{4}$ has dimension $8$, and he conjectured that this is the minimal possible dimension.
 We show below that his conjecture is true.
 We notice, that the completion of the algebra with potential $F=x^{3}+y^{3}+(x+y)^{4}$  has dimension $8$ as well.

 We will use the following definition of the completion of an algebra  (for more information on the completion of an algebra see Section $2$ in \cite{DWZ}, page 7, formula (2.3)). Let $\K\langle x,y\rangle$ be a free noncommutative algebra in free generators $x,y$.
We assign to $x$ and to $y$ degree 1, and  denote by $F[n]$  the linear space space spanned by all monomials from $\K\langle x,y\rangle $ whose degree is at least $n$.
 \begin{definition} Let $I$ be an ideal in $\K\langle x,y\rangle $, then the {\it closure} of I is the set of all elements $r \in \K\langle x,y\rangle $ such that
  for every $n$,
$r\in I+F[n]$. If $\bar I$ is the closure of the ideal  $I$, then we will say that $\K\langle x,y\rangle /{\bar I}$ is the  {\it completion}  of the algebra $\K\langle x,y\rangle /I$.
\end{definition}

 \begin{theorem}
 The minimal dimension of the  completion  of a potential algebra is $8$.
\end{theorem}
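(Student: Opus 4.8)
\emph{Strategy.} I would prove the two halves of the statement separately: the completion of \emph{every} potential algebra has dimension $\geq 8$, and for one explicit potential it is exactly $8$.

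\emph{Lower bound.} This is already essentially in the proof of Theorem~\ref{important}, which I would reread with the completion in mind. Applied to a cyclic invariant $G$ with $G_0=G_1=G_2=0$, that argument produces the ideal $J$ of $\K\langle x,y\rangle$ generated by $\partial_xG$, $\partial_yG$ and \emph{all} monomials of degree $5$, and it exhibits eight elements of $\K\langle x,y\rangle$ no nontrivial linear combination of which lies in $J$; that is, $\dim\K\langle x,y\rangle/J\geq 8$. The ideal generated by all degree-$5$ monomials is exactly $F[5]$, so $J=I_G+F[5]$, and by the definition of the closure $\overline{I_G}\subseteq I_G+F[5]=J$. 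Hence $\K\langle x,y\rangle/J$ is a quotient of the completion $\K\langle x,y\rangle/\overline{I_G}$, so the completion has dimension at least $8$.

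\emph{Upper bound.} I would use $F=x^{2}y+xyx+yx^{2}+y^{3}+y^{4}$, for which $\partial_xF=xy+yx$ and $\partial_yF=x^{2}+y^{2}+y^{3}$, so that $I:=I_F={\rm Id}(xy+yx,\;x^{2}+y^{2}+y^{3})$. First I would extract the extra relation $y^{5}\in\overline I$. In $A_F$ one has $xy=-yx$ and $x^{2}=-y^{2}-y^{3}$; computing $x^{3}$ as $x\cdot x^{2}$ and moving the $x$ to the right gives $-y^{2}x+y^{3}x$, while computing it as $x^{2}\cdot x$ gives $-y^{2}x-y^{3}x$, so $2y^{3}x=0$, i.e.\ $y^{3}x=0$ in $A_F$ (assuming ${\rm char}\,\K\neq 2$). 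Consequently $y^{3}x^{2}=0$ in $A_F$, and multiplying $x^{2}+y^{2}+y^{3}=0$ on the left by $y^{3}$ gives $y^{5}+y^{6}=0$ in $A_F$; thus $y^{5}(1+y)\in I$, and multiplying by $y^{k}$ shows $y^{5}\equiv(-1)^{k}y^{5+k}\pmod I$ for all $k$, so $y^{5}\in I+F[n]$ for every $n$ and therefore $y^{5}\in\overline I$. Next I would bound the dimension: since $\overline I\supseteq{\rm Id}(xy+yx,\;x^{2}+y^{2}+y^{3},\;y^{5})=:K$ and $y^{3}x\in I\subseteq K$, working modulo $K$ one may push every $x$ to the right using $xy\to -yx$, kill every $y^{5}$ and every $y^{3}x$, and replace each surviving $x^{2}$ by $-y^{2}-y^{3}$; the only monomials not reduced away are $1,y,y^{2},y^{3},y^{4},x,yx,y^{2}x$. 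Hence $\dim\K\langle x,y\rangle/\overline I\leq\dim\K\langle x,y\rangle/K\leq 8$, and together with the lower bound this gives exactly $8$.

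\emph{A remark, and the main obstacle.} It is worth observing that the completion is genuinely smaller than $A_F$ itself: with the honest degree-lexicographic order $\{xy+yx,\;x^{2}+y^{2}+y^{3},\;x^{3}+y^{2}x\}$ is a Gr\"obner basis of $I$ and $\dim A_F=9$, but $A_F$ is not local, admitting a second algebra homomorphism to $\K$ (namely $x\mapsto 0$, $y\mapsto -1$); passing to the completion at the augmentation ideal removes this extra homomorphism (whose local ring is just $\K$) and leaves the $8$-dimensional local component. The real work is the rewriting analysis in the upper bound, whose one unusual feature is that the rule $x^{2}\to -y^{2}-y^{3}$ \emph{raises} degree, so termination cannot be read off a degree order; I would instead well-order monomials by (number of occurrences of $x$; then number of pairs in which an $x$ precedes a $y$; then length), under which each of the rules $xy\to -yx$, $x^{2}\to -y^{2}-y^{3}$, $y^{3}x\to 0$, $y^{5}\to 0$ strictly decreases the monomial, so the rewriting terminates and one may appeal to the power-series analogue of the Diamond Lemma. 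The only arithmetic restriction used is ${\rm char}\,\K\neq 2$, needed to pass from $2y^{3}x=0$ to $y^{3}x=0$.
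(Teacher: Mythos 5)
Your proposal is correct, and its lower-bound half is exactly the paper's argument: since $\overline{I}\subseteq I+F[5]$ and the ideal generated by all degree-$5$ monomials is $F[5]$, the quotient $\K\langle x,y\rangle/(I+F[5])$ is a quotient of the completion, and Theorem~\ref{important} gives dimension at least $8$ for it. Where you genuinely diverge is the upper bound: the paper does not prove it at all, but simply cites Wemyss's (unpublished, private-communication) computation that the completion of the algebra with potential $F=x^{2}y+xyx+yx^{2}+y^{3}+y^{4}$ has dimension $8$. You verify this claim yourself, and your verification is sound: from $xy+yx$ and $x^{2}+y^{2}+y^{3}$ one gets $2y^{3}x\in I$ by comparing $x\cdot x^{2}$ with $x^{2}\cdot x$, hence $y^{3}x\in I$ when ${\rm char}\,\K\neq2$, then $y^{5}+y^{6}\in I$, so $y^{5}\in\overline{I}$, and modulo ${\rm Id}(xy+yx,\;x^{2}+y^{2}+y^{3},\;y^{5})\subseteq\overline{I}$ the eight monomials $1,y,y^{2},y^{3},y^{4},x,yx,y^{2}x$ span, giving dimension at most $8$; combined with the lower bound this is exactly $8$. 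What your route buys is a self-contained proof of the theorem (and, as a bonus, the correct observation that $A_F$ itself is $9$-dimensional and non-local, so completing is genuinely needed); what it costs is the explicit hypothesis ${\rm char}\,\K\neq2$, which the paper never discusses because it leans on the citation. Two small streamlinings: for the upper bound you only need the eight monomials to \emph{span} modulo the enlarged ideal, which follows from termination of your rewriting alone, so the appeal to a power-series Diamond Lemma is unnecessary (linear independence is already supplied by the lower bound); and the degree-raising rule $x^{2}\to-y^{2}-y^{3}$ causes no trouble precisely because, as you note, it strictly decreases the number of occurrences of $x$.
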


\begin{proof} Let notation be as above. Observe that  the dimension of  $\K\langle x,y\rangle/I+F[5]$ doesn't exceed the dimension of the
 completion of $\K\langle x,y\rangle/I$, since $I+F[5] \supset \cap (I+F[n])=\bar I$.

 Therefore by the above definition of the closure of an ideal, and by the same proof as in Theorem ~\ref{important}, the  completion of the potential algebra has dimension at least $8$.   On the other hand, Wemyss has shown that there is a potential algebra whose completion has dimension $8$, for example algebra defined by the potential  $F= x^{2}y+xyx+yx^{2}+y^{3}+y^{4}$.
\end{proof}

 We conclude this section with two open questions.

 {\bf Question 1. } Let $A_{F}$ be a potential algebra with potential $F$, which is a sum of terms of degree $4$ or higher. Can $A_{F}$ be finite-dimensional?

 Note that by Theorem ~\ref{deg40}, if  $F$ is  homogeneous then $A_{F}$ is infinite-dimensional.

{\bf Question 2.}  Let $A_{F}$ be a potential algebra whose potential $F$ is a sum of terms of degree $3$ or higher. Can
 $A_{F}$ have dimension $8$?

 Wemyss \cite{Wemyss} conjectured that every potential algebra comes from geometry and is related to $A_{con}$.
 If this were true then, as shown by Toda~\cite{T}, the difference between the dimension of a potential algebra and its abelianization is a linear combination of squares of natural numbers starting from 2, with non-negative integer coefficients. In the next chapter, we consider some special cases of this conjecture.
 Rings which come from geometry have special structure, as shown in \cite{WD}.
In Remark 3.17 in \cite{WD}, Donovan and Wemyss show that every $A_{con}$ has a central element $g$ that cuts to an algebra of special form, in particular the factor algebra $A_{con}/gA_{con}$ has dimension 1, 4, 12, 24, 40 or 60.


\section{Difference of dimensions of $A$ and its abelianization via Gopakumar-Vafa invariants }\label{Wem}

In this section we consider the conjecture due to Wemyss,
 \cite{WD}, which says that for finite dimensional algebras the difference between the dimension of a potential algebra and its abelianization is a linear combination of squares of natural numbers starting from 2, with non-negative integer coefficients.
Moreover, in \cite{T} it is shown, that these integer coefficients do coincide with Gopakumar-Vafa invariants \cite{Ka}.

In this section  we prove the conjecture for one example of potential of certain kind, using Gr\"obner basis arguments.


Let $F=x^2y+xyx+yx^2+xy^2+yxy+y^2x+a(y)$, where $a=\sum_{j=3}^n a_jy^j\in\K[y]$ is of degree $n>3$ and has only terms of degree $\geq3$. Let $A$ be the corresponding potential algebra $A=\K\langle x,y\rangle/I$, where the ideal $I$ is generated by $d_xF=xy+yx+y^2$ and $d_yF=xy+yx+x^2+b(y)$ with $b(y)=\sum_{j=3}^n a_jy^{j-1}$. Symbol $B$ stands for the abelianization of $A$: $B=A/Id(xy-yx)$.

Claim 1. $\dim B=n+1$.

\begin{proof}
Clearly $B=\K[x,y]/J$, where $J$ is the ideal generated by $2xy+y^2$ and $2xy+x^2+b(y)$. We use the lexicographical ordering (with $x>y$) on commutative monomials. The leading monomials of the defining relations are $x^2$ and $xy$. Resolving the overlap $x^2y$ completes the commutative Gr\"obner basis of the ideal of relations of $B$ yielding $4yb(y)-3y^3$, which together with defining relations comprise a Gr\"obner basis. The corresponding normal words are $1,x,y,\dots,y^{n-1}$. Hence the dimension of $B$ is $n+1$.
\end{proof}

Claim 2. Denote $c(y)=\frac12(b(y)-b(-y))$ and $d(y)=\frac12(b(y)+b(-y))$, the odd and even parts of $b$. Then $A$ is infinite dimensional if and only if $c=0$ (that is, if and only if $a$ is odd). If $c\neq 0$ and $m=\deg c<n-1=\deg b$, then $\dim A=n+2m-1$. If $c\neq 0$ and $\deg c=\deg b$, then $\dim A=3n-3$. In any case $\dim A-\dim B$ is a multiple of $4$.

\begin{proof} We sketch the idea of the proof. From the defining relation $xy+yx+y^2$ it follows that both $x^2$ and $y^2$ are central in $A$.
The other defining relation $xy+yx+x^2+b(y)$ has the leading monomial $y^{n-1}$ (now we use the deg-lex order on non-commutative monomials assuming $x>y$). One easily sees that if $b$ is even (that is $c=0$), then the defining relations form a Gr\"obner basis. The leading monomials now are $xy$ and $y^{n-1}$, while the normal words are $y^jx^k$ with $0\leq j<n$, $k\geq 0$. Hence $A$ is infinite dimensional.

Assume now that $m=\deg c<n-1=\deg b$. Since $x^2$ and $y^2$ are central, the defining relations imply that so are $xy+yx$ and $c(y)$. In particular, we have a relation $[x,c(y)]=0$. The relation $xy+yx+y^2=0$ allows us to rewrite $[x,c(y)]=0$ as $2c(y)x+c(y)y=0$, providing a relation with the leading monomial $y^mx$. Now, resolving the overlap $y^{n-1}x$, we get a relation with the leading monomial $x^3$. Now one routinely checks, that the defining relations together with the two extra relations we have obtained form a Gr\"obner basis in the ideal of relations. The leading monomials are $xy$, $x^3$, $y^{n-1}$ and $y^mx$. Thus the normal words are $y^j$ with $0\leq j\leq n-2$, $y^jx$ and $y^jx^2$ with $0\leq j\leq m-1$. This gives $\dim A=n+2m-1$ and $\dim A-\dim B=2m-2$, which is a multiple of 4 since $m$ is odd.

Finally, assume that $\deg b<\deg c$. In this case one can verify that the relation $[x,c(y)]=0$ reduces to one with the leading monomial $x^3$ and that the last relation together with the defining relations forms a Gr\"obner basis in the ideal of relations. The leading monomials are $xy$, $x^3$ and $y^{n-1}$. Thus the normal words are $y^j$, $y^jx$ and $y^jx^2$ with $0\leq j\leq  n-2$. This gives $\dim A=3n-3$ and $\dim A-\dim B=2n-4$, which is a multiple of 4 since in this case $n$ is even.
\end{proof}


 \section{Acknowledgements}
 We would like to thank Michael Wemyss for communicating to us a number of interesting questions on potential algebras and shearing his insights.
 We  are grateful  to the anonymous referees for careful reading and many useful comments, which helped to improve the text.

 This work is funded by the ERC grant 320974.

\vspace{17mm}

\scshape

\noindent   Natalia Iyudu \& Agata Smoktunowicz

\noindent School of Mathematics

\noindent  The University of Edinburgh

\noindent James Clerk Maxwell Building

\noindent The King's Buildings

\noindent Peter Guthrie Tait Road

\noindent Edinburgh

\noindent Scotland EH9 3FD

\noindent E-mail address:\ \ \

{ niyudu@staffmail.ed.ac.uk, a.smoktunowicz@ed.ac.uk }\ \ \

\vskip 11mm

\vskip1truecm

\end{document}